\documentclass[11pt,english]{article}

\usepackage[T1]{fontenc}
\usepackage[latin9]{inputenc}
\usepackage{geometry}
\geometry{verbose,tmargin=2 cm,bmargin=2.5cm,lmargin=2.5cm,rmargin=2.5cm}
\usepackage{babel}
\usepackage{amsmath}
\usepackage{amsthm}
\usepackage{amssymb}
\usepackage[unicode=true,pdfusetitle,
 bookmarks=true,bookmarksnumbered=false,bookmarksopen=false,
 breaklinks=false,pdfborder={0 0 0},pdfborderstyle={},backref=false,colorlinks=false]
 {hyperref}
\usepackage{breakurl}
\makeatletter

\usepackage[nameinlink,capitalise,noabbrev]{cleveref}

\hypersetup{%
    bookmarksnumbered, bookmarksopen=true, bookmarksopenlevel=1,%
}

\theoremstyle{plain}
\newtheorem{thm}{Theorem}[section]
\crefname{thm}{Theorem}{Theorems}
\theoremstyle{plain}
\newtheorem{lem}[thm]{Lemma}
\crefname{lem}{Lemma}{Lemmas}
\theoremstyle{plain}

\theoremstyle{plain}
\newtheorem*{claim*}{Claim}
\crefname{claim}{Claim}{Claims}
\theoremstyle{definition}
\newtheorem{defn}[thm]{Definition}
\theoremstyle{plain}
\newtheorem{conjecture}[thm]{Conjecture}
\theoremstyle{plain}

\theoremstyle{definition}

\theoremstyle{definition}
\newtheorem{rem}[thm]{Remark}
\theoremstyle{plain}
\newtheorem*{rem*}{Remark}
\theoremstyle{plain}
\newtheorem{claim}[thm]{Lemma}
\crefname{claim}{Lemma}{Lemmas}

\crefformat{equation}{#2(#1)#3}
\crefname{appsec}{Appendix}{Appendices}

\date{}

\let\originalleft\left
\let\originalright\right
\renewcommand{\left}{\mathopen{}\mathclose\bgroup\originalleft}
\renewcommand{\right}{\aftergroup\egroup\originalright}

\usepackage{verbatim}

\makeatletter
\renewcommand*{\UrlTildeSpecial}{%
  \do\~{%
    \mbox{%
      \fontfamily{ptm}\selectfont
      \textasciitilde
    }%
  }%
}%
\let\Url@force@Tilde\UrlTildeSpecial
\makeatother

\usepackage{enumitem}

\makeatother

\begin{document}

\title{Halfway to Rota's basis conjecture}

\author{Matija Buci\'c\thanks{Department of Mathematics, ETH, Z\"urich, Switzerland. Email: \href{mailto:matija.bucic@math.ethz.ch} {\nolinkurl{matija.bucic@math.ethz.ch}}.}\and
Matthew Kwan\thanks{Department of Mathematics, Stanford University, Stanford, CA 94305. Email: \href{mailto:mattkwan@stanford.edu} {\nolinkurl{mattkwan@stanford.edu}}. Research supported in part by SNSF project 178493.}\and
Alexey Pokrovskiy\thanks{Department of Economics, Mathematics and Statistics, Birkbeck, University of London. Email:
\href{mailto:Dr.Alexey.Pokrovskiy@gmail.com} {\nolinkurl{Dr.Alexey.Pokrovskiy@gmail.com}}.}\and Benny Sudakov\thanks{Department of Mathematics, ETH, Z\"urich, Switzerland. Email:
\href{mailto:benjamin.sudakov@math.ethz.ch} {\nolinkurl{benjamin.sudakov@math.ethz.ch}}.
Research supported in part by SNSF grant 200021-175573.}}

\maketitle
\global\long\def\E{\mathbb{E}}
\global\long\def\Var{\operatorname{Var}}
\global\long\def\S{\mathcal{S}}

\begin{abstract}
In 1989, Rota made the following conjecture. Given $n$
bases $B_{1},\dots,B_{n}$ in an $n$-dimensional vector space $V$,
one can always find $n$ disjoint bases of $V$, each containing exactly
one element from each $B_{i}$ (we call such bases \emph{transversal
bases}). Rota's basis conjecture remains wide open despite its apparent
simplicity and the efforts of many researchers (for example, the conjecture
was recently the subject of the collaborative ``Polymath'' project).
In this paper we prove that one can always find $\left(1/2-o\left(1\right)\right)n$
disjoint transversal bases, improving on the previous best bound of
$\Omega\left(n/\log n\right)$. Our results also apply to the more
general setting of matroids.
\end{abstract}

\section{Introduction}

Given bases $B_{1},\dots,B_{n}$ in an $n$-dimensional vector space
$V$, a \emph{transversal basis }is a basis of $V$ containing a single
distinguished vector from each of $B_{1},\dots,B_{n}$. Two transversal
bases are said to be \emph{disjoint} if their distinguished vectors
from $B_{i}$ are distinct, for each $i$ (here ``distinguished'' means that two copies of the same vector appearing in two $B_i$s are considered distinct). In 1989, Rota conjectured
(see \cite[Conjecture~4]{HR94}) that for any vector space $V$ over
a characteristic-zero field, and any choice of $B_{1},\dots,B_{n}$,
one can always find $n$ pairwise disjoint transversal bases.

Despite the apparent simplicity of this conjecture, it remains wide
open, and has surprising connections to apparently unrelated subjects.
Specifically, it was discovered by Huang and Rota \cite{HR94} that
there are implications between Rota's basis conjecture, the Alon--Tarsi
conjecture \cite{AT92} concerning enumeration of even and odd Latin
squares, and a certain conjecture concerning the supersymmetric bracket
algebra.

Rota also observed that an analogous conjecture could be made in the
much more general setting of \emph{matroids}, which are objects that
abstract the combinatorial properties of linear independence in vector
spaces. Specifically, a finite matroid $M=\left(E,\mathcal{I}\right)$
consists of a finite ground set $E$ (whose elements may be thought
of as vectors in a vector space), and a collection $\mathcal{I}$
of subsets of $E$, called independent sets. The defining properties
of a matroid are that:
\begin{itemize}

\item{the empty set is independent (that is, $\emptyset\in\mathcal{I}$);}

\item{subsets of independent sets are independent (that is, if $A'\subseteq A\subseteq E$
and $A\in\mathcal{I}$, then $A'\in\mathcal{I}$);}

\item{if $A$ and $B$ are independent sets, and $\left|A\right|>\left|B\right|$,
then an independent set can be constructed by adding an element of
$A$ to $B$ (that is, there is $a\in A\backslash B$ such that $B\cup\left\{ a\right\} \in\mathcal{I}$).
This final property is called the \emph{augmentation property}.}

\end{itemize}

Observe that any finite set of elements in a vector space (over any
field) naturally gives rise to a matroid, though not all matroids
arise this way. A \emph{basis }in a matroid $M$ is a maximal independent
set. By the augmentation property, all bases have the same size, and
this common size is called the \emph{rank }of $M$. The definition
of a transversal basis generalises in the obvious way to matroids,
and the natural matroid generalisation of Rota's basis conjecture
is that for any rank-$n$ matroid and any bases $B_{1},\dots,B_{n}$,
there are $n$ disjoint transversal bases.

Although Rota's basis conjecture remains open, various special cases
have been proved. Several of these have come from the connection between
Rota's basis conjecture and the Alon--Tarsi conjecture,
which has since been simplified by Onn \cite{Onn97}. Specifically,
due to work by Drisko \cite{Dri97} and Glynn \cite{Gly10} on the
Alon--Tarsi conjecture, Rota's original conjecture for vector
spaces over a characteristic-zero field is now known to be true whenever
the dimension $n$ is of the form $p\pm1$, for $p$ a prime. Wild
\cite{Wil94} proved Rota's basis conjecture for so-called ``strongly
base-orderable'' matroids, and used this to prove the conjecture
for certain classes of matroids arising from graphs. Geelen and Humphries
proved the conjecture for ``paving'' matroids \cite{GH06}, and
Cheung \cite{Che12} computationally proved that the conjecture holds
for matroids of rank at most 4.

Various authors have also proposed variations and weakenings of Rota's
basis conjecture. For example, Aharoni and Berger \cite{AB06} showed
that in any matroid one can cover the set of all the elements in $B_{1},\dots,B_{n}$
by at most $2n$ ``partial'' transversals, and Bollen and Draisma
\cite{BD15} considered an ``online'' version of Rota's basis conjecture,
where the bases $B_{i}$ are revealed one-by-one. In 2017, Rota's
basis conjecture received renewed interest when it was chosen as the
twelfth ``Polymath'' project, in which amateur and professional
mathematicians from around the world collaborated on the problem.
Some of the fruits of the project were a small improvement to Aharoni
and Berger's theorem, and improved understanding of the online version
of Rota's basis conjecture \cite{Pol17}. See \cite{polyproposal}
for Timothy Chow's proposal of the project, see \cite{poly1,poly2,poly3}
for blog posts where much of the discussion took place, and see \cite{polywiki}
for the Polymath wiki summarising most of what is known about Rota's
basis conjecture.

One particularly natural direction to attack Rota's problem is to
try to find lower bounds on the number of disjoint transversal bases. Rota's basis conjecture
asks for $n$ disjoint transversal bases, but it is not completely
obvious that even two disjoint transversal bases must exist! Wild
\cite{Wil94} proved some lower bounds for certain matroids arising
from graphs, but the first nontrivial bound for general matroids was
by Geelen and Webb \cite{GW07}, who used a generalisation of Hall's
theorem due to Rado \cite{Rad42} to prove that there must be $\Omega\left(\sqrt{n}\right)$
disjoint transversal bases. Recently, this was improved by Dong and
Geelen \cite{DG18}, who used a beautiful probabilistic argument to
prove the existence of $\Omega\left(n/\log n\right)$ disjoint transversal
bases. In this paper we improve this substantially and obtain the
first linear bound.
\begin{thm}
\label{thm:new}For any $\varepsilon>0$, the following holds for
sufficiently large $n$. Given bases $B_{1},\dots,B_{n}$ of a rank-$n$
matroid, there are at least $\left(1/2-\varepsilon\right)n$ disjoint
transversal bases.
\end{thm}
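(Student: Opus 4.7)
The plan is to combine a probabilistic construction of many disjoint transversals with near-maximal rank with a deterministic exchange argument that upgrades each of them into a full basis. Enumerate $B_i = \{v_{i,1}, \ldots, v_{i,n}\}$ for each $i$, and observe that a family of $m$ pairwise disjoint transversals is the same thing as a tuple of column-distinct functions $\sigma_1, \ldots, \sigma_m : [n] \to [n]$ (meaning that for each $i$ the values $\sigma_1(i), \ldots, \sigma_m(i)$ are all distinct), with associated transversals $T_k = \{v_{i,\sigma_k(i)} : i \in [n]\}$. The task is to find such a tuple with $m = (1/2 - \varepsilon)n$ in which every $T_k$ is a basis of the matroid.

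The first step is to build $m$ disjoint transversals whose ranks are all at least $n - o(n)$. Uniformly random column-distinct sampling is not good enough: a uniformly random transversal of $n$ bases has expected rank only about $(1 - 1/e)n$, because in a greedy exposure the marginal probability to extend the partial basis at rank $r$ is only $(n-r)/n$. Instead I would use a more refined construction, for instance a two-phase scheme in which most coordinates are sampled (approximately) uniformly while a small reserve of $\delta n$ coordinates is held back and then filled in greedily via Rado's theorem to push the rank up to $n - o(n)$. The key matroidal input here is the exchange property, which implies that for any flat $F$ of rank $r$ and any basis $B_i$ one has $|F \cap B_i| \leq r$; this allows one to control the probability that a random transversal is trapped in a low-rank flat.

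The second step is to repair the residual rank deficiency $d_k = n - \operatorname{rank}(T_k)$ of each transversal. For each $k$ and each redundant slot $i$ of $T_k$ (one where $v_{i,\sigma_k(i)}$ lies in the span of the other coordinates of $T_k$), I would replace the index $\sigma_k(i)$ by a fresh index $j$ drawn from the unused pool $[n] \setminus \{\sigma_1(i), \ldots, \sigma_m(i)\}$, such that $v_{i,j}$ is independent of the rest of $T_k$. Because $m < n/2$, every column has at least $n - m > n/2$ unused indices, strictly exceeding the $m$ used ones, and this excess is exactly the slack that the factor $1/2$ buys. I would carry out all repairs simultaneously by casting the problem as a common-transversal problem in an auxiliary bipartite matroid between the deficient slots and the augmenting elements, and verify its Hall / Rado condition using the exchange axiom together with the a priori bound $d_k = o(n)$.

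The hard part will be the simultaneous augmentation. A single swap that fixes $T_k$ may use an index already assigned to some other $T_{k'}$, forcing a cascading rerouting that could in principle unravel all $m$ transversals at once; controlling the length and feasibility of these augmenting paths is exactly where the slack $n - 2m \gg 0$ is used, and where the threshold $1/2$ emerges organically rather than as an artifact. A secondary difficulty is that the random transversals $T_k$ are correlated through the column-distinctness constraint, so a naive union bound over the events \emph{``$T_k$ has rank at least $n - o(n)$''} is too lossy; to avoid this I would expose the random choices column by column and track all the rank deficiencies simultaneously via a Doob martingale, applying Azuma-type concentration before union bounding over $k$.
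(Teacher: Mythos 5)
Your plan is structurally different from the paper's (which is entirely deterministic and never uses randomness), and it has two serious gaps, the second of which is precisely the problem the paper's main technical machinery is built to solve.

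\textbf{Gap 1: the random construction does not reach rank $n-o(n)$.} You correctly observe that a uniformly random transversal has expected rank only about $(1-1/e)n$, because in the worst case exactly $(n-r)/n$ of the elements of the next basis extend a partial transversal of rank $r$. But the two-phase fix you sketch cannot close this gap. After uniformly sampling $(1-\delta)n$ coordinates the rank is (in the worst case) concentrated near $n\bigl(1-e^{-(1-\delta)}\bigr)$, and greedily filling the remaining $\delta n$ reserve slots adds at most $\delta n$ more rank, leaving the final rank below $n(1+\delta)(1-1/e)$, which has a linear deficit for small $\delta$. Rado's theorem does not rescue this: it tells you when a system of distinct representatives independent in the matroid exists, but the bottleneck here is that the random first phase leaves too large a deficit. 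Getting $m$ disjoint transversals of rank $n-o(n)$ requires an entirely different scheme, and you have not provided one. (This is why the paper builds its partial transversals deterministically and incrementally from the empty set, always keeping every one of the $f$ sets independent by construction.)

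\textbf{Gap 2: the repair step is identified but not solved, and it is the crux.} Suppose $T_k$ has rank $n-d_k$ and slot $i$ is redundant. Then $T_k - v_{i,\sigma_k(i)}$ also has rank $n-d_k$, so by augmentation there are at least $d_k$ elements of $B_i$ that would strictly increase the rank --- but there could be \emph{exactly} $d_k$, and if $d_k = o(n) \ll m$ every one of them could already be claimed by another transversal in column $i$. So a single swap per deficient slot is provably insufficient, and one is forced into cascading reroutings. You acknowledge this (``forcing a cascading rerouting that could in principle unravel all $m$ transversals at once; controlling the length and feasibility of these augmenting paths is exactly where the slack $n-2m\gg 0$ is used'') but offer no mechanism to control the cascade. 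That control is the actual content of the paper: the definitions of $(S,b)$-addable/swappable (\cref{def:addable} and the one following), the cascade-addability machinery (\cref{Defn_Cascade}, \cref{Claim_Perform_Cascade}, \cref{Claim_Concatenate_cascade}), and the quantitative lemma \cref{claim:cascade-increase} showing that the set of reachable elements grows geometrically (by a factor $\approx (n-f)/f$, which exceeds $1$ precisely because $f < n/2$ --- this is where $1/2$ really enters). The final push from $\Omega(n)$ to $(1/2-\varepsilon)n$ additionally requires the out-star argument of \cref{claim:many-missing} to guarantee a good ``seed'' for the cascade. None of this is a routine Hall/Rado verification. Your proposal localizes the difficulty correctly but leaves it unresolved, so it is not a proof.
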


Of course, since matroids generalise vector spaces, this also implies
the same result for bases in an $n$-dimensional vector space. We
also remark that for the weaker fact that there exist $\Omega\left(n\right)$
disjoint transversal bases, our methods give a simpler proof;
see \cref{rem:linear}.

In contrast to the previous work by Dong, Geelen and Webb, our approach
is to show how to build a collection of transversal bases in an iterative
fashion (reminiscent of augmenting path arguments in matching problems).
It is tempting to imagine a future path to Rota's basis conjecture
(at least in the case of vector spaces) using such an approach: by
improving on our arguments, perhaps introducing some randomness, it
might be possible to iteratively build a collection of $\left(1-o\left(1\right)\right)n$
transversal bases, and then it might be possible to use some sort
of ``template'' or ``absorber'' structure to finish the job. This
was precisely the approach taken in Keevash's celebrated proof of
the existence of designs \cite{Kee14}. Actually, it has been observed
by participants of the Polymath project (see \cite{poly1}) that Rota's
basis conjecture and the existence of designs conjecture both seem
to fall into a common category of problems which are not quite ``structured''
enough for purely algebraic methods, but too structured for probabilistic
methods.

\vspace{0.30cm}

\noindent{\bf Notation.} We will frequently want to denote the result of adding and removing single elements from a set. For a set $S$ and some $x\notin S$, $y\in S$, we write $S+x$ to mean $S\cup \{x\}$, and we write $S-y$ to mean $S\setminus \{y\}$.

\section{Finding many disjoint transversal bases}

In this section we prove \cref{thm:new}. It is convenient to think
of $B_{1},\dots,B_{n}$ as ``colour classes''.
\begin{defn}
Let $U=\left\{ \left(x,c\right):x\in B_{c},1\le c\le n\right\} $ be the set of
all coloured elements that appear in one of $B_{1},\dots,B_{n}$.
For $S\subseteq U$, let $\pi\left(S\right)=\left\{ x:\left(x,c\right)\in S\;\text{for some }c\right\} $
be its set of matroid elements. We say that a subset of elements of $U$
is a \emph{rainbow independent set} (RIS for short) if all its matroid
elements are distinct and form an independent set, and all their colours
are distinct.
\end{defn}

Note that an RIS with size $n$ corresponds to a transversal basis.
We remark that RISs are sometimes also known as \emph{partial transversals}.
Note that two transversal bases are disjoint if and only if their
corresponding RISs are disjoint as subsets of $U$.

Let $f=\left(1-\varepsilon\right)n/2$. The basic idea is to start
with a collection of $f$ empty RISs (which are trivially disjoint),
and iteratively enlarge the RISs in this collection, maintaining disjointness,
until we have many disjoint transversal bases.

Let $\S$ be a collection of $f$ disjoint RISs. We define the \emph{volume
}$\sum_{S\in\S}\left|S\right|$ of $\S$ to be the total number of
elements in the RISs in $\S$. We will show how to modify $\S$ to
increase its volume. We let $F=\bigcup_{S \in \S} S$ be the set of all currently used elements. One should think of $F$ as being the set of all elements which we cannot add to any $S\in\S$
without violating the disjointness of RISs in $\S$. 

We stress that in the following two subsections we fix a collection $\S$ and define $F$ as above. All our definitions and claims are with respect to these $F$ and $\S$. We will show that under certain conditions the size of $\S$ can be increased, at which point one needs to restart the argument from the beginning with a new $\S$ (and a new $F$). This is made precise in \Cref{subsec:increasing}. 

\begin{rem*}
We remark that it is actually possible to reduce to the case where each $B_c$ is disjoint, by making duplicate copies of all elements that appear in multiple $B_c$. So, instead of working with the universe $U$ of element/colour pairs, one can alternatively think of $U$ as being a collection of $n^2$ different matroid elements (each of which has a colour associated with it).
\end{rem*}

\subsection{\label{subsec:simple-swaps}Simple swaps}

Our objective is to increase the volume of $\S$. If an RIS $S\in\S$
is missing a colour $c$ and there is $x\in B_{c}$ independent to
the elements of $S$, such that $\left(x,c\right) \notin F$, then we can add $\left(x,c\right)$ to $S$
to create a larger RIS, increasing the volume of $\S$. We will want
much more freedom than this: we also want to consider those elements
that can be added to $S$ after making a small change to $S$. This
motivates the following definition.
\begin{defn}
\label{def:addable}Consider an RIS $S$ and a colour $b$ that does
not appear in $S$. Say an element $\left(x,c\right)\in U$ (possibly $(x,c)\in F$) is $\left(S,b\right)$-\emph{addable
}if either
\begin{itemize}
\item $S+\left(x,c\right)$ is an RIS, or;
\item There is $\left(x',c\right)\in S$ and $\left(y,b\right)\notin F$
such that $S-\left(x',c\right)+\left(y,b\right)+\left(x,c\right)$
is an RIS.
\end{itemize}
In the second case we say that $y$ is a \emph{witness} for the $\left(S,b\right)$-addability of $\left(x,c\right)$. For $(x',c) \in S$ and $(y,b) \notin F$ when $S-\left(x',c\right)+\left(y,b\right)$ is an RIS we say it
is the result of applying a \emph{simple swap }to $S$.
\end{defn}

If for some RIS $S\in\S$ missing a colour $b$ there is an $\left(S,b\right)$-addable
element $\left(x,c\right)\notin F$,
then we can increase the volume of $\S$ by adding $\left(x,c\right)$
to $S$, possibly after applying a simple swap to $S$. Note that we do not require $S\in \S$ for the definition of $(S,b)$-addability, though in practice we will only ever consider $S$ that are either in $\S$ or slight modifications of RISs in $\S$.

Our next objective is to show that for any $S$ missing a colour $b$,
either there is an $\left(S,b\right)$-addable element that is not
in $F$ (which would allow us to increase the volume of $\S$, as
above), or else there are \emph{many }$\left(S,b\right)$-addable
elements (which must therefore be in $F$). Although this will not
allow us to immediately increase the volume of $\S$, it will allow
us to transfer an element to $S$ from some other $S'\in\S$, and
this freedom to perform local modifications will be very useful.

Towards this end, we study which elements of $S$ can be used in a
simple swap.
\begin{defn}
Consider an RIS $S$ and consider a colour $b$ that does not appear
on $S$. We say that a colour $c$ appearing on $S$ is \emph{$\left(S,b\right)$-swappable}
if there is a simple swap yielding an RIS $S+\left(y,b\right)-\left(x',c\right)$,
with $\left(y,b\right)\notin F$ and $\left(x',c\right)\in S$. (For $S+\left(y,b\right)-\left(x',c\right)$ to be an RIS, we just need $\pi(S)+y-x'$ to be an independent set in our matroid.) We say that $y$ is a witness for
the $\left(S,b\right)$-swappability of $c$.
\end{defn}

(Basically, a colour is $\left(S,b\right)$-swappable if
we can replace it with a $b$-coloured element which is not in
$F$). For a colour $c$ we denote by $F_{c}=\left\{ x\in B_{c}:\left(x,c\right)\in F\right\}$ the set of matroid elements which appear in $\S$ with colour $c$.
\begin{claim}
\label{claim:many-good}For a nonempty RIS $S$ and a colour $b$ not
appearing in $S$, either there is an $\left(S,b\right)$-addable element $(y,b)\notin F$ or there are at least $n-\left|F_{b}\right|$ colours
which are $\left(S,b\right)$-swappable.
\end{claim}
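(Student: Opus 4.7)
Write $T:=\pi(S)$ (an independent set) and $Y:=\{y\in B_{b}:(y,b)\notin F\}$, so $|Y|=n-|F_{b}|$ and $Y$ is independent as a subset of the basis $B_{b}$; moreover, since $S$ is an RIS, distinct elements $x\in T$ carry distinct colours $c_{x}$ in $S$. Let $\operatorname{cl}$ denote matroid closure. My plan is to split on whether $Y\subseteq\operatorname{cl}(T)$. If not, pick any $y\in Y\setminus\operatorname{cl}(T)$; then $T+y$ is independent and $y\notin T$, so $S+(y,b)$ is an RIS (matroid elements distinct and independent, colours distinct since $b$ is missing from $S$). By the first bullet of \Cref{def:addable}, $(y,b)$ is $(S,b)$-addable, and $(y,b)\notin F$ since $y\in Y$, giving the first alternative.

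\textbf{Otherwise,} $Y\subseteq\operatorname{cl}(T)$, and I plan to produce $|Y|$ distinct $(S,b)$-swappable colours via Hall's theorem. Form the bipartite graph $H$ on parts $T$ and $Y$ with $x\sim y$ iff $T-x+y$ is independent. A matching saturating $Y$ yields an injection $\phi:Y\to T$; for each $y\in Y$ with $x=\phi(y)$, the colour $c_{x}$ is $(S,b)$-swappable with witness $y$, since $S-(x,c_{x})+(y,b)$ has matroid elements forming the independent set $T-x+y$ and distinct colours (as $b$ was missing from $S$), and $(y,b)\notin F$. Because distinct $x$'s have distinct $c_{x}$'s, this produces $|Y|=n-|F_{b}|$ distinct swappable colours. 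Note that the case $y\in T\cap Y$ is handled seamlessly: $y$ may be matched to itself since $T-y+y=T$ is independent, and the resulting swap $S-(y,c_{y})+(y,b)$ is an RIS.

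\textbf{Verifying Hall's condition} is the crux. Fix $Y_{0}\subseteq Y$ and let $N=N_{H}(Y_{0})\subseteq T$. For every $x\in T\setminus N$ and every $y\in Y_{0}$, the set $T-x+y$ is dependent; since $T-x$ is independent, this forces $y\in\operatorname{cl}(T-x)$. Hence $Y_{0}\subseteq\bigcap_{x\in T\setminus N}\operatorname{cl}(T-x)$. The one substantive step I will need is the matroid identity
\[
\bigcap_{x\in A}\operatorname{cl}(T-x)=\operatorname{cl}(T\setminus A)\qquad\text{for independent }T\text{ and }A\subseteq T,
\]
which I expect to prove by a quick fundamental-circuit analysis: for $y\in\operatorname{cl}(T)\setminus T$, $y\in\operatorname{cl}(T-x)$ iff $x\notin C(y,T)$, the fundamental circuit of $y$ in $T$, so the intersection selects exactly those $y$ whose circuit avoids $A$, i.e. sits inside $T\setminus A$. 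Applying this with $A=T\setminus N$ gives $Y_{0}\subseteq\operatorname{cl}(N)$, whence $|Y_{0}|=\operatorname{rank}(Y_{0})\le\operatorname{rank}(N)\le|N|$, so Hall's condition holds and the matching exists. Aside from the closure identity, the whole argument is a clean Hall's-theorem packaging.
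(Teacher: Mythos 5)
Your approach is genuinely different from the paper's: you run a two-case analysis (does $Y:=B_b\setminus F_b$ lie in $\operatorname{cl}(T)$ or not) and invoke Hall's theorem together with the closure identity $\bigcap_{x\in A}\operatorname{cl}(T-x)=\operatorname{cl}(T\setminus A)$, whereas the paper argues by contradiction: it lets $S'\subseteq S$ be the elements of swappable colour, shows $|S'|<|S|$, augments $S'+(y,b)$ back up to size $|S|$ using elements of $S-S'$, and observes that the one colour of $S-S'$ that gets dropped is then swappable, contradiction. The paper's route is shorter and uses only the augmentation axiom; yours is more structural (it actually exhibits an explicit injection from $B_b\setminus F_b$ into the swappable colours, a stronger conclusion).

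There is, however, a gap in your Hall step. You define $H$ by $x\sim y$ iff $T-x+y$ is independent, but this is too permissive: if $y\in T\cap Y$ and $x\neq y$, then $T-x+y=T-x$ is independent, so the edge $x\sim y$ is present, yet $S-(x,c_x)+(y,b)$ is \emph{not} an RIS, because the matroid element $y$ now appears twice (once as $(y,c_y)\in S$, once as $(y,b)$). So a Hall matching in $H$ may pair some $y\in T\cap Y$ with an $x\neq y$, and for that pair your claimed swap is invalid. Your remark that ``$y$ \emph{may} be matched to itself'' notes the good choice exists, but Hall's theorem gives you \emph{a} matching, not this particular one, and you cannot simply re-route: once $\phi^{-1}(y)$ is already taken by some $y'\in Y\setminus T$, forcing $\phi(y)=y$ is a nontrivial modification.

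The fix is small. Replace $H$ by the graph $H'$ in which $x\sim y$ additionally requires $y\notin T-x$ (equivalently: $y\notin T$, or $y=x$); exactly these edges encode a legal simple swap $S-(x,c_x)+(y,b)$. Hall's condition for $H'$ follows from the same computation: for $Y_0\subseteq Y$ and $x\in T\setminus N_{H'}(Y_0)$, every $y\in Y_0$ satisfies $y\in\operatorname{cl}(T-x)$, either because $y\in T$ and $y\neq x$ (so $y\in T-x$ trivially), or because $y\notin T$ and the non-edge forces $T-x+y$ dependent. Then your closure identity gives $Y_0\subseteq\operatorname{cl}(N_{H'}(Y_0))$ and $|Y_0|\le|N_{H'}(Y_0)|$ as before. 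With this modification the argument is correct. (As a side note, \cref{lem:matching} in the paper proves a closely related Hall-type statement by the more elementary augmentation-only route, avoiding the closure identity; you may find it instructive to compare.)
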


\begin{proof}
For the purpose of contradiction, suppose that there is no $\left(S,b\right)$-addable element $(y,b)\notin F$, and that there are fewer than $n-\left|F_{b}\right|$ colours which are $\left(S,b\right)$-swappable. Let $S'\subseteq S$ be the set of all elements of
$S$ which have an $\left(S,b\right)$-swappable colour, so $\left|S'\right|<n-\left|F_{b}\right|$.
Also $\left|S'\right|<\left|S\right|$ because otherwise we would have $|S|<n-\left|F_{b}\right|$, so by the augmentation property there would be $y \in B_b \setminus F_b$ such that $S+(y,b)$ is an RIS (meaning that $(y,b)\notin F$ would be $\left(S,b\right)$-addable). Repeating this argument for $S'$ in place of $S$, there is $y\in B_{b}\backslash F_{b}$
such that $S'+\left(y,b\right)$ is an RIS. By repeatedly using the augmentation
property, we can add $\left|S-S'\right|-1$ elements of $S-S'$ to
$S'+\left(y,b\right)$. This gives an RIS of size $|S|$ of the form $S+\left(y,b\right)-\left(x',c\right)$
for some $\left(x',c\right)\in S-S'$. But this means $c$ is $\left(S,b\right)$-swappable, so $(x',c) \in S'$ by the definition of $S'$. This is a contradiction.
\end{proof}
Now we show that all elements of an $\left(S,b\right)$-swappable
colour which are independent to $\pi\left(S\right)$ are $\left(S,b\right)$-addable,
unless there is an \emph{$\left(S,b\right)$}-addable element not
in $F$. (Recall that $\pi\left(S\right)$ is the set of matroid elements
in $S$, without colour data.)
\begin{claim}
\label{claim:add-if-good}Consider an RIS $S$ with no element of
a colour $b$ and consider a colour
$c$ that is $\left(S,b\right)$-swappable with witness $y$. Either
$S+\left(y,b\right)$ is an RIS (thus, $\left(y,b\right)\notin F$
is $\left(S,b\right)$-addable), or otherwise for any $x\in B_{c}$
independent of $\pi\left(S\right)$, $\left(x,c\right)$ is $\left(S,b\right)$-addable with witness $(y,b)$.
\end{claim}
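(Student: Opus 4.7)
The plan is to dispatch the first alternative vacuously: assume $S+(y,b)$ is not an RIS, and aim to show that then $S-(x',c)+(y,b)+(x,c)$ is one, where $(x',c)\in S$ is the element witnessing the $(S,b)$-swappability of $c$ by $y$. Since colour $b$ does not appear on $S$ and colour $c$ is merely removed and re-added, the colour condition is automatic; so everything reduces to showing that $\pi(S)-x'+y+x$ is an independent set of size $|S|+1$, which is exactly what $(S,b)$-addability of $(x,c)$ with witness $y$ demands.

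For this, the natural tool is the augmentation property applied to the two independent sets
\[
A \;=\; \pi(S)+x \qquad\text{and}\qquad B \;=\; \pi(S)-x'+y,
\]
where $A$ is independent by the hypothesis on $x$ (with $|A|=|S|+1$) and $B$ is independent by the $(S,b)$-swappability of $c$ (with $|B|=|S|$). Augmentation produces some $z\in A\setminus B$ with $B+z$ independent, and a direct check shows $A\setminus B\subseteq\{x,x'\}$. If $z=x$ then $B+x=\pi(S)-x'+y+x$ is precisely the independent set we want, and we are done. If instead $z=x'$ then $B+x'=\pi(S)+y$ is independent, and since $y\in B$ together with $y\neq x'$ forces $y\notin \pi(S)$, this would mean $S+(y,b)$ is an RIS --- contradicting our standing assumption and ruling out this branch.

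The remaining wrinkle is the degenerate cases $x=y$ or $x'=y$, which can arise if the colour classes share matroid elements. When $x=y$ we have $x\in B$, so $A\setminus B=\{x'\}$ and augmentation forces $z=x'$; the contradiction in the previous paragraph then shows that this case simply cannot occur under the standing assumption. When $x'=y$ we have $B=\pi(S)$, so the target set $\pi(S)-x'+y+x$ collapses to $\pi(S)+x$, which is independent by hypothesis. I do not expect any real obstacle here: the argument is essentially the augmentation dichotomy together with routine bookkeeping of these edge cases, and the only point one has to be careful about is confirming the auxiliary fact $y\notin \pi(S)$ whenever one wants to upgrade ``$\pi(S)+y$ independent'' to ``$S+(y,b)$ is an RIS''.
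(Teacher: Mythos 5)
Your proof is correct and uses essentially the same argument as the paper: apply the augmentation property to $I=\pi(S)+x$ and $J=\pi(S)+y-x'$, note the augmenting element must be $x$ or $x'$, and in the $x'$ branch conclude $S+(y,b)$ is an RIS. You additionally spell out the degenerate cases $x=y$ and $x'=y$, which the paper leaves implicit (and which also disappear under the paper's remark that one may assume the colour classes are disjoint by duplicating shared elements).
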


\begin{proof}
Let $\left(x',c\right)$ be the element with colour $c$ in $S$.
Consider some $x\in B_{c}$ independent to $\pi\left(S\right)$. Let $I=\pi\left(S\right)+x$ and $J=\pi\left(S\right)+y-x'$. By the augmentation property, there is an element of $I\backslash J$ that is independent of $J$; this element is either $x'$ or $x$. In the former case $S+\left(y,b\right)$
is an RIS. In the latter case, $S+\left(y,b\right)-\left(x',c\right)+\left(x,c\right)$ is
an RIS, showing that $\left(x,c\right)$ is $\left(S,b\right)$-addable. 
\end{proof}

The following lemma gives a good illustration of how to use the ideas developed in this section to find many addable elements. It will be very useful later on.

\begin{claim}\label{claim:1-addability} 
Let $S\in\S$ and let $b$ be a colour which does not appear in $S$. Then either we can increase the volume of $\S$ or there are at least $(n-|S|)\left(n-f\right)$ elements that are $\left(S,b\right)$-addable.
\end{claim}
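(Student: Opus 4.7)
The plan is to contrapose on the first alternative and combine the two preceding claims. Suppose we cannot increase the volume of $\S$; by the discussion immediately after \Cref{def:addable}, this is equivalent to saying that no $(S,b)$-addable element lies outside $F$. We may assume $0<|S|<n$: the case $|S|=n$ is impossible since then $S$ already contains every colour (including $b$), and the case $|S|=0$ is trivial, as $|U|=n^{2}\ge n(n-f)$ and every pair in $U$ is $(\emptyset,b)$-addable.

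The next step is to count swappable colours. Since each of the $f$ disjoint RISs in $\S$ contains at most one element of colour $b$, we have $|F_{b}|\le f$. Applying \Cref{claim:many-good} (whose other alternative would directly yield an $(S,b)$-addable element outside $F$, contradicting our assumption) produces a set $C$ of at least $n-|F_{b}|\ge n-f$ colours that are $(S,b)$-swappable. For each $c\in C$, fix a witness $(y_{c},b)\notin F$ of the $(S,b)$-swappability of $c$ and apply \Cref{claim:add-if-good}: its first alternative would make $(y_{c},b)\notin F$ itself $(S,b)$-addable, again contradicting the assumption, so we must be in the second alternative---every $x\in B_{c}$ independent of $\pi(S)$ yields an $(S,b)$-addable pair $(x,c)$.

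To finish, I would invoke the standard matroid fact that the closure $\overline{\pi(S)}$ has rank $|S|$ and therefore meets the basis $B_{c}$ in at most $|S|$ elements; hence $B_{c}$ contains at least $n-|S|$ elements independent of $\pi(S)$. Since pairs with distinct colours $c$ are automatically distinct elements of $U$, summing over $c\in C$ gives at least $|C|\,(n-|S|)\ge(n-f)(n-|S|)$ distinct $(S,b)$-addable pairs, which is exactly the required bound.

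I do not foresee a real obstacle here: the lemma is essentially a direct packaging of \Cref{claim:many-good} and \Cref{claim:add-if-good} together with the basic matroid estimate on how many basis elements can lie inside a fixed flat. The only point requiring care is the contrapositive bookkeeping---recognising that the single assumption ``no volume increase'' simultaneously rules out the ``bad'' alternative of both preceding claims, which is precisely what lets us stitch together a large swappable colour set with many addable elements per colour.
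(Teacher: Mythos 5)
Your proof is correct and follows essentially the same route as the paper: rule out the ``easy'' alternatives of \cref{claim:many-good} and \cref{claim:add-if-good} under the assumption that the volume cannot be increased, count at least $n-f$ swappable colours, and for each such colour count at least $n-|S|$ elements of $B_c$ independent of $\pi(S)$. The only cosmetic differences are that you explicitly treat the boundary cases $|S|=0$ (where \cref{claim:many-good} does not apply, a detail the paper glosses over) and $|S|=n$, and that you justify the ``$n-|S|$ elements'' step via the rank of the closure of $\pi(S)$ rather than the paper's repeated use of the augmentation property; both are standard and equivalent.
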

\begin{proof}
If there is an element $\left(y,b\right)\notin F$ which is $\left(S,b\right)$-addable,
then we can directly add this element to $S$ (making a simple swap if necessary), increasing the volume
of $\S$. Otherwise, observe that $\left|F_{b}\right|\le\left|\S\right|=f$,
so by \cref{claim:many-good} there are at least $n-f$ colours that
are $\left(S,b\right)$-swappable. For
each such colour $c$, by the augmentation property, there are at least $n-|S|$ elements $x\in B_{c}$
independent to all the elements of $S$, each of which is $\left(S,b\right)$-addable
by \cref{claim:add-if-good}. That is to say, there are at least $(n-|S|)\left(n-f\right)$
elements which are $\left(S,b\right)$-addable, as claimed.
\end{proof}

In our proof of \cref{thm:new} we also make use of the following lemma. In the course of our arguments, when we need to find many addable elements with a given colour, it will allow us to ensure that these elements are actually distinct.

\begin{lem}
\label{lem:matching}Let $S$ be an RIS. Then for each $B_{b}$, we
can find an injection $\phi_{b}:S\to B_{b}$ such that for all
$\left(x,c\right)\in S$, $\phi_{b}\left(\left(x,c\right)\right)$
is independent of $\pi\left(S-\left(x,c\right)\right)$.
\end{lem}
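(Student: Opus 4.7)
The plan is to phrase the desired injection as a perfect matching problem and verify Hall's condition via matroid augmentation. Form the bipartite graph $G$ with parts $S$ and $B_b$, where $(x,c)\in S$ is adjacent to $y\in B_b$ exactly when $\pi(S)-x+y$ is independent in the underlying matroid. Since $S$ is an RIS, the map $(x,c)\mapsto x$ is injective, so a matching of $G$ that saturates $S$ is exactly an injection $\phi_b : S \to B_b$ of the required form.

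To check Hall's condition, fix $T\subseteq S$ and set $I := \pi(S)\setminus\pi(T)$, which is independent of size $|S|-|T|$. Repeated application of the augmentation property to $I$ and the basis $B_b$ extends $I$ to a basis of the form $I\cup Y$ with $Y\subseteq B_b$ and $|Y| = n-|S|+|T| \ge |T|$ (using $|S|\le n$). Every $y\in Y$ satisfies that $I+y\subseteq I\cup Y$ is independent.

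The key step is to show $Y\subseteq N_G(T)$. Fix $y\in Y$ and greedily add elements of $\pi(T)$ to $I+y$ while maintaining independence, producing a maximal independent set $B^*$. At that point every $x\in\pi(T)\setminus B^*$ lies in $\mathrm{cl}(B^*)$, so $\pi(T)\subseteq\mathrm{cl}(B^*)$ and therefore $\pi(S)=I\cup\pi(T)\subseteq\mathrm{cl}(B^*)$, forcing $|B^*|\ge r(\pi(S))=|S|$. Since $B^*\subseteq \pi(S)+y$, we also have $|B^*|\le |S|+1$. If $|B^*|=|S|$, then exactly one element $x\in\pi(T)$ was left out and $B^* = \pi(S)-x+y$ is independent. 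If $|B^*|=|S|+1$, then $B^*=\pi(S)+y$, so $\pi(S)-x+y$ is independent for every $x\in\pi(T)$. In either case $y$ is adjacent in $G$ to some $(x,c)\in T$, so $|N_G(T)|\ge |Y|\ge |T|$, and Hall's theorem delivers the required matching. This lemma is essentially a special case of Brualdi's symmetric basis exchange theorem, and the case analysis above is the only nontrivial content; I do not anticipate a real obstacle.
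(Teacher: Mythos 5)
Your proof takes essentially the same route as the paper: the same bipartite graph, Hall's theorem, and matroid augmentation to verify Hall's condition. (The paper verifies Hall's condition by directly augmenting $\pi(S\setminus T)+y$ against $\pi(S)$ rather than via a closure argument, but this is a cosmetic difference; note only that your case split at the end is slightly imprecise when $y$ already lies in $\pi(T)$ — there $B^*=\pi(S)+y=\pi(S)$ with nothing left out, though $y$ is then still adjacent to its own preimage in $T$, so the conclusion stands.)
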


\begin{proof}
Consider the bipartite graph $G$ where the first part consists of the elements of $S$ and
the second part consists of the elements of $B_{b}$, with an edge
between $\left(x,c\right)\in S$ and $y\in B_{b}$ if $y$ is independent
of $\pi\left(S-\left(x,c\right)\right)$. We use Hall's theorem
to show that there is a matching in this bipartite graph covering $S$. Indeed,
consider some $W\subseteq S$. By the augmentation property, there
are at least $\left|W\right|$ elements $y\in B_{b}$ such that $\pi\left(S-W\right)+y$
is an independent set, and again using the augmentation property,
each of these can be extended to an independent set of the form $\pi\left(S\right)+y-x$
for some $\left(x,c\right)\in W$. That is to say, $W$ has at least
$\left|W\right|$ neighbours in $G$.
\end{proof}

We thank the anonymous referees for pointing out that \cref{lem:matching} also follows from a result due to Brualdi \cite{brualdi69}.

\subsection{Cascading swaps}

Informally speaking, for any $S_{0}\in\S$ which is not a transversal
basis, we have shown that either we can directly augment $S_{0}$,
or there are many elements $\left(x_{1},c_{1}\right)\in U$ with which
we can augment $S_{0}$ after performing a simple swap. It's possible
that each such $\left(x_{1},c_{1}\right)$ already appears in some
other $S_{1}\in\S$, but if this occurs we need not give up: we can
transfer $\left(x_{1},c_{1}\right)$ from $S_{1}$ to $S_{0}$ and
then continue to look for elements $\left(x_{2},c_{2}\right)\in U$
with which we can augment $S_{1}-\left(x_{1},c_{1}\right)$ (again,
possibly with a swap). We can iterate this idea, looking for sequences
\[
S_{1},\dots,S_{\ell}\in\S,\quad\left(x_{1},c_{1}\right)\in S_{1},\,\left(x_{2},c_{2}\right)\in S_{2},\dots,\left(x_{\ell},c_{\ell}\right)\in S_{\ell},\,\left(x_{\ell+1},c_{\ell+1}\right)\notin\bigcup_{S\in\S}S
\]
such that, after a sequence of simple swaps, each $\left(x_{i},c_{i}\right)$
is transferred from $S_{i}$ to $S_{i-1}$, and then $\left(x_{\ell+1},c_{\ell+1}\right)$
can be added to $S_{\ell}$. (We also need to ensure that the simple swaps we perform preserve disjointness of RISs in $\S$.) This transformation has the net effect
of adding an element to $S_{0}$ and keeping the size of all other
$S\in\S$ constant, thus increasing the volume of $\S$.

Crucially, because of the freedom afforded by simple swaps, each time
we expand our search to consider longer cascades, our number of options
for $\left(x_{\ell+1},c_{\ell+1}\right)$ increases. For sufficiently
large $\ell$, the number of options will be so great that there must
be suitable $\left(x_{\ell+1},c_{\ell+1}\right)$ not appearing in
any RIS in $\S$. In order to keep this analysis tractable, we will
only consider transformations that cascade along a single sequence
of RISs $S_0,\dots,S_{\ell}$; we will iteratively construct this
sequence of RISs in such a way that there are many possibilities $\left(x_{i},c_{i}\right)\in S_{i}$
relative to the number of possibilities $\left(x_{i-1},c_{i-1}\right)\in S_{i-1}$
in the previous step. The next definition makes precise the cascades
that we consider.
\begin{defn}\label{Defn_Cascade}
Consider a sequence of distinct RISs $S_{0},\dots,S_{\ell-1}\in\S$.
Say an element $\left(x_{\ell},c_{\ell}\right)\notin S_{0},\dots,S_{\ell-1}$
is \emph{cascade-addable with respect to} $S_{0},\dots,S_{\ell-1}$
if there is a colour $c_{0}$ and sequences
\[
\left(x_{1},c_{1}\right),\dots,\left(x_{\ell-1},c_{\ell-1}\right)\in U,\qquad y_{0}\in B_{c_{0}},\dots,y_{\ell-1}\in B_{c_{\ell-1}},
\]
such that the following hold.
\begin{itemize}
\item For each $1\le i\le\ell-1$, we have $\left(x_{i},c_{i}\right)\in S_{i}$;
\item $c_{0}$ does not appear in $S_{0}$, and $\left(x_{1},c_{1}\right)$
is $\left(S_{0},c_{0}\right)$-addable with
witness $y_{0}$;
\item for each $0\le i\le\ell-1$, $\left(x_{i+1},c_{i+1}\right)$ is $\left(S_{i}-\left(x_{i},c_{i}\right),c_{i}\right)$-addable
with witness $y_{i}$;
\item the colours $c_0,\dots,c_\ell$ are distinct.
\end{itemize}
We call $c_0, c_1, \dots, c_{\ell-1}$  \emph{a sequence of colours freeing $(x_{\ell}, c_{\ell})$}.

We write $Q\left(S_{0},\dots,S_{\ell-1}\right)$ for the set of all
elements outside $S_{0},\dots,S_{\ell-1}$ which are cascade-addable
with respect to $S_{0},\dots,S_{\ell-1}$.
\end{defn}

We remark that if $\ell=1$ then most of the conditions in the above definition become vacuous and an element being cascade-addable with respect to $S_0$ is equivalent to it being $(S_0, c_0)$-addable with a witness, for some colour $c_0.$ 
Observe
that if an element $\left(x_{\ell},c_{\ell}\right)$ is cascade-addable
then we can transfer it into $S_{\ell-1}$, as the final step in a
cascading sequence of simple swaps and transfers. The following lemma makes this precise.

\begin{claim}\label{Claim_Perform_Cascade}
Suppose that $\left(x_{\ell},c_{\ell}\right)$
is {cascade-addable }with respect to $S_{0},\dots,S_{\ell-1}$ and $c_0, c_1, \dots, c_{\ell-1}$  is a sequence of colours freeing $(x_{\ell}, c_{\ell})$.  
Then there are $S_0'\dots S'_{\ell-1} \subseteq S_0\cup \dots \cup S_{\ell-1}\cup B_{c_0}\cup \dots \cup B_{c_{\ell-1}}$ such that replacing $S_0, \dots, S_{\ell-1}$  with $S'_0, \dots, S'_{\ell-1}$ in $\mathcal S$ results in a family $\mathcal S'$ of disjoint RISs of the same total volume as $\mathcal S$, in such a way that $S'_{\ell-1}+\left(x_{\ell},c_{\ell}\right)$ is an RIS.
\end{claim}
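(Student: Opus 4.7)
The plan is to explicitly construct $S'_0, S'_1, \dots, S'_{\ell-1}$ by unpacking the addability conditions in the cascade data, performing one ``transfer with simple swap'' at each level. First I would define $S'_0$ using the $(S_0, c_0)$-addability of $(x_1, c_1)$ with witness $y_0$: if $S_0 + (x_1, c_1)$ is already an RIS, set $S'_0 = S_0 + (x_1, c_1)$; otherwise take the (unique) $(x'_0, c_1) \in S_0$ supplied by the second case of addability and set $S'_0 = S_0 - (x'_0, c_1) + (y_0, c_0) + (x_1, c_1)$. Then, for each $1 \le i \le \ell - 2$, I would use the $(S_i - (x_i, c_i), c_i)$-addability of $(x_{i+1}, c_{i+1})$ with witness $y_i$ to set either $S'_i = S_i - (x_i, c_i) + (x_{i+1}, c_{i+1})$ or, in the swap case, $S'_i = S_i - (x_i, c_i) - (x'_i, c_{i+1}) + (y_i, c_i) + (x_{i+1}, c_{i+1})$. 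Finally, the same reasoning applied to the $(S_{\ell-1} - (x_{\ell-1}, c_{\ell-1}), c_{\ell-1})$-addability of $(x_\ell, c_\ell)$ determines $S'_{\ell-1}$, chosen so that $S'_{\ell-1} + (x_\ell, c_\ell)$ is an RIS (without actually including $(x_\ell, c_\ell)$ in $S'_{\ell-1}$ itself).

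By construction, each $S'_i$ is an RIS contained in $S_0 \cup \dots \cup S_{\ell-1} \cup B_{c_0} \cup \dots \cup B_{c_{\ell-1}}$, since every element of $S'_i$ is either inherited from some $S_j$ or is the witness $(y_i, c_i) \in B_{c_i}$. A straightforward count of additions and removals gives $|S'_0| = |S_0| + 1$, $|S'_i| = |S_i|$ for $1 \le i \le \ell - 2$, and $|S'_{\ell-1}| = |S_{\ell-1}| - 1$, so the total volume is preserved. The final property that $S'_{\ell-1} + (x_\ell, c_\ell)$ is an RIS holds by design of the last step.

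The main obstacle is verifying that $\S'$ is a family of pairwise disjoint RISs. The elements introduced into $\S'$ that are not already in some original $S_j$ are exactly the witnesses $(y_i, c_i)$; these satisfy $(y_i, c_i) \notin F$ by definition, so they do not coincide with any element of any $S \in \S$, and the distinctness of the colours $c_0, \dots, c_{\ell-1}$ ensures that different witnesses are distinct from each other. Each transferred element $(x_{i+1}, c_{i+1})$ is moved from $S_{i+1}$ into $S'_i$ but is explicitly removed in constructing $S'_{i+1}$, so it appears in at most one member of $\S'$. A potential collision between a witness $(y_i, c_i)$ and a transferred element $(x_{j+1}, c_{j+1})$ would force $c_i = c_{j+1}$ and hence $j = i-1$, reducing to comparing $(y_i, c_i)$ with $(x_i, c_i)$; these are distinguished because $(y_i, c_i) \notin F$ while $(x_i, c_i) \in F$. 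All remaining potential collisions are ruled out immediately by distinctness of the colours $c_0,\dots,c_\ell$ or by disjointness of the original $S_j$'s.
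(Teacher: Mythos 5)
Your construction and disjointness argument match the paper's proof essentially step for step: you unpack each addability condition into the same explicit formulas for $S'_0,\dots,S'_{\ell-1}$, account for the sizes the same way, and use the same three facts (witnesses lie outside $F$, the colours $c_0,\dots,c_\ell$ are distinct, each transferred $(x_{i+1},c_{i+1})$ is removed from $S_{i+1}$) to verify disjointness. The only superfluous detail is your case split on whether $S_i - (x_i,c_i) + (x_{i+1},c_{i+1})$ is already an RIS: cascade-addability is defined via \emph{witnesses}, which by \cref{def:addable} forces the swap case (colour $c_{i+1}$ already appears in $S_i$), so your ``no-swap'' alternative never occurs and the paper simply omits it.
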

\begin{proof}
Let $\left(x_{1},c_{1}\right),\dots,\left(x_{\ell-1},c_{\ell-1}\right)\in U, y_{0}\in B_{c_{0}},\dots,y_{\ell-1}\in B_{c_{\ell-1}}$ be as in the definition of cascade-addability. For each $i=0, \dots, \ell-1$, let $(x'_i, c_{i+1})$ be the colour $c_{i+1}$ element of $S_i$ (which exists, because, from cascade-addability, $\left(x_{i+1},c_{i+1}\right)$ is $\left(S_{i}-\left(x_{i},c_{i}\right),c_{i}\right)$-addable
\emph{with a witness}). For each $i=1, \dots, \ell-2$, let $S_i'= S_i-(x_i,c_i)- (x'_i, c_{i+1})+ (y_i,c_i) + (x_{i+1}, c_{i+1})$.  Let $S_0'=S_0- (x'_0, c_{1}) +(y_0,c_0) + (x_{1}, c_{1})$ and $S_{\ell-1}'= S_{\ell-1}-(x_{\ell-1},c_{\ell-1})- (x'_{\ell-1}, c_{\ell})+ (y_{\ell-1},c_{\ell-1})$.  
Let $\mathcal S'$ be the family formed by replacing $S_0, \dots, S_{\ell-1}$  with $S'_0, \dots, S'_{\ell-1}$ in $\mathcal S$. It is easy to check that $\mathcal S'$ has the same total volume as $\mathcal S$, so it remains to check that it is a family of disjoint RISs.

For $i=1, \dots, \ell-2$, $S_i'$ is an RIS because it comes from  $S_i-(x_i,c_i)$ by making the change in the definition of $(x_{i+1}, c_{i+1})$ being $(S_i-(x_i,c_i), c_i)$-addable with witness $y_i$ (and addability always produces an RIS by definition). 
Similarly $S_{\ell-1}'+\left(x_{\ell},c_{\ell}\right)$ is an RIS.
To see that $S_0'$ is an RIS we use that $(x_1, c_1)$ is $(S_0,c_0)$-addable with witness $y_0$, and that $c_0$ does not appear in $S_0$, both of which come from the definition of cascade-addability.

It remains to show that the RISs $S_0', \dots, S_{\ell-1}'$ are disjoint from each other and the other RISs in $\mathcal S$. The elements $(y_i,c_i)$ occur in only one RIS  $S_i'$ because they come from outside  $F$ (since they are addability witnesses), and because their colours $c_0, \dots, c_{\ell-1}$ are distinct (from the definition of cascade-addability). The elements $(x_i, c_i)$ occur in only one RIS because they get removed from $S_{i}$ and added to $S_{i-1}$.
\end{proof}

The following lemma lets us build longer cascades.
\begin{claim}\label{Claim_Concatenate_cascade}
Suppose that $\left(x_{\ell},c_{\ell}\right)\in S_{\ell}$
is {cascade-addable }with respect to $S_{0},\dots,S_{\ell-1}$ and $c_0, c_1, \dots, c_{\ell-1}$  is a sequence of colours freeing $(x_{\ell}, c_{\ell})$. If $(x,c)$ is $(S_{\ell}-(x_{\ell}, c_{\ell}), c_{\ell})$-addable with a witness then either $(x,c)\in S_{0}\cup \dots\cup S_{\ell}\cup B_{c_0}\cup \dots\cup B_{c_{\ell}}$ or $(x,c)$ is {cascade-addable} with respect to $S_{0},\dots,S_{\ell}$.
\end{claim}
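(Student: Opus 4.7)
The approach is completely direct: assume $(x,c)\notin S_{0}\cup\dots\cup S_{\ell}\cup B_{c_{0}}\cup\dots\cup B_{c_{\ell}}$, and then extend the existing cascade freeing $(x_{\ell},c_{\ell})$ by one further stage in the obvious way. The original cascade supplies colours $c_{0},\dots,c_{\ell-1}$, intermediate elements $(x_{1},c_{1}),\dots,(x_{\ell-1},c_{\ell-1})$, and witnesses $y_{0},\dots,y_{\ell-1}$. The hypothesis on $(x,c)$ supplies a witness $y_{\ell}\in B_{c_{\ell}}$ for $(x,c)$ being $(S_{\ell}-(x_{\ell},c_{\ell}),c_{\ell})$-addable. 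My plan is to use, as the freeing data for $(x,c)$ with respect to $S_{0},\dots,S_{\ell}$, the colour sequence $c_{0},c_{1},\dots,c_{\ell}$, the intermediate elements $(x_{1},c_{1}),\dots,(x_{\ell-1},c_{\ell-1}),(x_{\ell},c_{\ell})$, and the witnesses $y_{0},\dots,y_{\ell-1},y_{\ell}$.

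Verifying the four bullets of \cref{Defn_Cascade} is then routine bookkeeping. The sequence $S_{0},\dots,S_{\ell}$ consists of distinct RISs because the original cascade-addability of $(x_{\ell},c_{\ell})$ forces $(x_{\ell},c_{\ell})\notin S_{0},\dots,S_{\ell-1}$, while by hypothesis $(x_{\ell},c_{\ell})\in S_{\ell}$. The containment $(x_{i},c_{i})\in S_{i}$ for $1\le i\le\ell-1$ and the first two bullets of \cref{Defn_Cascade} about $c_{0}$ and $(x_{1},c_{1})$ are inherited from the original cascade, while $(x_{\ell},c_{\ell})\in S_{\ell}$ is part of the hypothesis. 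The addability condition is inherited from the original cascade for $0\le i\le\ell-1$, and the case $i=\ell$ is exactly the hypothesis that $(x,c)$ is $(S_{\ell}-(x_{\ell},c_{\ell}),c_{\ell})$-addable with witness $y_{\ell}$.

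The last clause, distinctness of the colours $c_{0},\dots,c_{\ell},c$, is where the assumption $(x,c)\notin B_{c_{0}}\cup\dots\cup B_{c_{\ell}}$ is used: the colours $c_{0},\dots,c_{\ell}$ are pairwise distinct by the original cascade, and $c\neq c_{i}$ for each $0\le i\le\ell$ because otherwise $c=c_{i}$ would give $x\in B_{c}=B_{c_{i}}$ and hence $(x,c)\in B_{c_{i}}$, contradicting the assumption. Since $(x,c)\notin S_{0},\dots,S_{\ell}$ is also part of the assumption, the extended cascade satisfies every requirement of \cref{Defn_Cascade}.

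There is no real obstacle here; the only point requiring care is interpreting $(x,c)\in B_{c_{i}}$ correctly (it means the colour $c$ equals $c_{i}$, since a coloured element of $U$ of colour $c_{i}$ automatically has its underlying matroid element in $B_{c_{i}}$), and observing that the distinctness of $S_{\ell}$ from the earlier RISs comes for free from the original cascade-addability of $(x_{\ell},c_{\ell})$.
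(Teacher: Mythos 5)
Your proof is correct and matches the paper's own argument: both simply extend the given cascade by one step (appending $(x_\ell,c_\ell)$, $c_\ell$, and the new witness) and verify each clause of the definition of cascade-addability directly, with the only nontrivial observation being that $(x,c)\notin B_{c_i}$ is exactly what forces $c\neq c_i$.
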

\begin{proof}
Suppose that $(x,c)\not \in S_{0},\dots,S_{\ell}, B_{c_0}, \dots, B_{c_{\ell}}$. 
For the definition of $(x,c)$ being {cascade-addable}, all the conditions not involving $(x,c)$  and $(x_{\ell},c_{\ell})$ hold as a consequence of  $\left(x_{\ell},c_{\ell}\right)\in S_{\ell}$
being {cascade-addable }with respect to $S_{0},\dots,S_{\ell-1}$.
It remains to check the conditions that $(x,c)\not\in S_{0},\dots,S_{\ell}$ and that
each of $c_0,\dots,c_\ell,c$ are distinct, both of which hold as a consequence of our assumption $(x,c)\not \in S_{0},\dots,S_{\ell}, B_{c_0}, \dots, B_{c_{\ell}}$.
\end{proof}

In the next lemma, we essentially show that given $S_{0},\dots,S_{\ell-1}$,
it is possible to choose $S_{\ell}$ in such a way that the number
of cascade-addable elements increases.
\begin{claim}
\label{claim:cascade-increase}Consider a sequence of distinct RISs
$S_{0},\dots,S_{\ell-1}\in\S$ with $1 \le \ell<f=\left|\S\right|$.
Then either we can modify $\S$ to increase its volume, or we can
choose $S_{\ell}\ne S_{0},\dots,S_{\ell-1}$ from $\S$ such that
\begin{equation}
\left|Q\left(S_{0},\dots,S_{\ell}\right)\right|\ge\frac{\left|Q\left(S_{0},\dots,S_{\ell-1}\right)\right|}{f-\ell}\cdot\left(n-f-\ell\right)-(\ell+1) n.\label{eq:recurrence}
\end{equation}
\end{claim}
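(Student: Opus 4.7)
The approach is to combine pigeonhole (to choose $S_\ell$) with \cref{claim:many-good,claim:add-if-good} (to generate many addable elements) and \cref{lem:matching} (to guarantee distinctness), invoking \cref{Claim_Perform_Cascade} as an ``escape hatch'' whenever an addable element outside $F$ appears, which always yields a volume increase.

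First I would handle the easy case: if any $(x_\ell, c_\ell) \in Q(S_0,\dots,S_{\ell-1})$ is not in $F$, then \cref{Claim_Perform_Cascade} directly increases the volume. Otherwise every element of $Q(S_0,\dots,S_{\ell-1})$ lies in exactly one of the $f - \ell$ RISs in $\S \setminus \{S_0, \dots, S_{\ell-1}\}$, so pigeonhole yields some $S_\ell$ with $|S_\ell \cap Q(S_0,\dots,S_{\ell-1})| \ge |Q(S_0,\dots,S_{\ell-1})|/(f-\ell)$. For each $(x_\ell, c_\ell) \in S_\ell \cap Q(S_0,\dots,S_{\ell-1})$, with its freeing sequence $c_0, \dots, c_{\ell-1}$, I would apply \cref{claim:many-good} to the RIS $S_\ell - (x_\ell, c_\ell)$ with colour $c_\ell$. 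If it yields an addable element $(y, c_\ell) \notin F$, then $S_\ell - (x_\ell, c_\ell) + (y, c_\ell)$ is an RIS; replacing $S_\ell$ by it frees $(x_\ell, c_\ell)$ from $F$, and since the witnesses $y_0, \dots, y_{\ell-1}$ have colours all different from $c_\ell$ they remain outside the new $F$, so the cascade-addability of $(x_\ell, c_\ell)$ with respect to $S_0,\dots,S_{\ell-1}$ is preserved and \cref{Claim_Perform_Cascade} again delivers a volume increase. Otherwise there are at least $n - f$ colours that are $(S_\ell - (x_\ell,c_\ell), c_\ell)$-swappable, hence at least $n - f - \ell$ such swappable colours $c$ outside $\{c_0, \dots, c_{\ell-1}\}$ (note $c_\ell$ cannot be swappable, being absent from $S_\ell - (x_\ell, c_\ell)$). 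For each such $c$, \cref{claim:add-if-good} ensures that every $x \in B_c$ independent of $\pi(S_\ell - (x_\ell, c_\ell))$ gives an $(S_\ell - (x_\ell, c_\ell), c_\ell)$-addable element $(x, c)$ with witness.

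The crucial step is to apply \cref{lem:matching} to $S_\ell$ and each colour $c$, obtaining an injection $\phi_c: S_\ell \to B_c$ with $\phi_c((x_\ell, c_\ell))$ independent of $\pi(S_\ell - (x_\ell, c_\ell))$. Then $(\phi_c((x_\ell, c_\ell)), c)$ is addable with witness and, by \cref{Claim_Concatenate_cascade}, cascade-addable with respect to $S_0, \dots, S_\ell$ provided it does not lie in $S_0 \cup \dots \cup S_\ell$ (the condition $c \notin \{c_0, \dots, c_\ell\}$ automatically disposes of the $B_{c_i}$-exclusion, which amounts to $c \ne c_i$). Two candidates with the same colour $c$ are distinct by the injectivity of $\phi_c$, and candidates of different colours are trivially distinct. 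A final counting step gives at least $|S_\ell \cap Q(S_0, \dots, S_{\ell-1})|(n - f - \ell)$ pairs; at most $(\ell + 1) n$ of them land in $S_0 \cup \dots \cup S_\ell$ (each $S_i$ contains at most one element of each of the $n$ colours), yielding the stated bound after combining with the pigeonhole estimate. The main obstacle is precisely this pairwise distinctness across different intermediates $(x_\ell, c_\ell)$, which is what \cref{lem:matching} delivers; further care is needed to ensure that the various escape hatches where an addable element outside $F$ appears are correctly converted into volume increases without disturbing the preexisting cascade structure.
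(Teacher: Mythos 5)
Your proposal is correct and follows essentially the same route as the paper: pigeonhole to choose $S_\ell$, then for each $(x_\ell,c_\ell)\in S_\ell\cap Q(S_0,\dots,S_{\ell-1})$ use \cref{claim:many-good} and \cref{claim:add-if-good} to produce $n-f-\ell$ addable elements of the form $(\phi_c((x_\ell,c_\ell)),c)$, with distinctness guaranteed by the injectivity from \cref{lem:matching}, before subtracting $(\ell+1)n$ for elements landing in $S_0\cup\dots\cup S_\ell$. The only (cosmetic) difference is in the escape-hatch bookkeeping: you perform the swap on $S_\ell$ first and then invoke \cref{Claim_Perform_Cascade}, whereas the paper invokes \cref{Claim_Perform_Cascade} first and then moves $(x_\ell,c_\ell)$ and adds $(y,c_\ell)$; both orderings yield the same volume increase.
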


\begin{proof}
If $Q\left(S_{0},\dots,S_{\ell-1}\right)$ contains an element $(x,c)$ not
in any $S\in\S$, then we can increase the volume of $\S$ with a
cascading sequence of simple swaps and transfers (using \cref{Claim_Perform_Cascade}, noting that if $\left(x_{\ell},c_{\ell}\right)\not\in F$, then we can add $\left(x_{\ell},c_{\ell}\right)$ to $S'_{\ell-1}$ in that lemma to get a larger family of RISs).

Otherwise, all the elements of $Q\left(S_{0},\dots,S_{\ell-1}\right)$ belong to some RIS $S\in \S\setminus\left\{ S_{0},\dots S_{\ell-1}\right\}$ (since $Q\left(S_{0},\dots,S_{\ell-1}\right)$ is defined to not contain any elements from $S_{0},\dots,S_{\ell-1}$).
Choose $S_{\ell}\in\S\setminus\left\{ S_{0},\dots S_{\ell-1}\right\}$ containing maximally many elements of $Q\left(S_{0},\dots,S_{\ell-1}\right)$. Since the $f-\ell$ RISs $S\in \S\setminus\left\{ S_{0},\dots S_{\ell-1}\right\}$ collectively contain all elements of $Q\left(S_{0},\dots,S_{\ell-1}\right)$, our chosen RIS $S_\ell$ must contain a proportion of at least $1/(f-\ell)$ of the elements of $Q\left(S_{0},\dots,S_{\ell-1}\right)$. In other words, if we let $Q=S_{\ell}\cap Q\left(S_{0},\dots,S_{\ell-1}\right)$, we have
\begin{equation}\label{eq:QBound}
\left|Q\right|\ge\frac{\left|Q\left(S_{0},\dots,S_{\ell-1}\right)\right|}{f-\ell}.
\end{equation}
Apply \cref{lem:matching} to $S_{\ell}$ to obtain an injection $\phi_{b}$, for every colour $b$.
Fix some $(x_{\ell},c_{\ell})\in Q$ and a sequence of colours $c_0, \dots, c_{\ell-1}$ freeing $(x_{\ell},c_{\ell})$. We prove a sequence of claims about how many elements are swappable/addable with respect to $\left(S_{\ell}-\left(x_{\ell},c_{\ell}\right),c_{\ell}\right)$, assuming we cannot increase the size of $\S$.
\begin{claim*}
{There are at least $n-f$ colours
which are $\left(S_{\ell}-\left(x_{\ell},c_{\ell}\right),c_{\ell}\right)$-swappable.}
\end{claim*}
\begin{proof}
By \cref{claim:many-good}, either there is an $\left(S_{\ell}-\left(x_{\ell},c_{\ell}\right),c_{\ell}\right)$-addable element $(y,c_\ell)\not\in F$, or there are at least $n-\left|F_{c_{\ell}}\right|\ge n-f$ colours
which are $\left(S_{\ell}-\left(x_{\ell},c_{\ell}\right),c_{\ell}\right)$-swappable. In the former case, we can increase the volume of $\S$, by a cascading sequence of swaps and transfers (first consider $\mathcal S'$ from \cref{Claim_Perform_Cascade}, then move $(x_\ell,c_\ell)$ from $S_{\ell}$ to $S_{\ell-1}'$, then add $(y,c_\ell)$ to $S_\ell-(x_\ell,c_\ell)$).
\end{proof}
\begin{claim*}
{There are at least $n-f$ colours $c$ for which $\left(\phi_{c}\left(\left(x_{\ell},c_{\ell}\right)\right),c\right)$
is $\left(S_{\ell}-\left(x_{\ell},c_{\ell}\right),c_{\ell}\right)$-addable.}
\end{claim*}
\begin{proof}
Let $c$ be a colour  which is $\left(S_{\ell}-\left(x_{\ell},c_{\ell}\right),c_{\ell}\right)$-swappable with witness $y$, as in the previous claim. If $y$ is independent
to $\pi\left(S_{\ell}-\left(x_{\ell},c_{\ell}\right)\right)$, we
can increase the volume of $\S$ by adding it to $S_{\ell}$ after
a cascading sequence of swaps and transfers (first consider $\mathcal S'$ from \cref{Claim_Perform_Cascade}, then move $(x_\ell,c_\ell)$ from $S_{\ell}$ to $S_{\ell-1}'$, then add $(y,c_\ell)$ to $S_\ell-(x_\ell,c_\ell)$). 
Otherwise, by \cref{claim:add-if-good} applied with $b=c_{\ell}$, $S=S_{\ell}-\left(x_{\ell},c_{\ell}\right)$, the element
$\left(\phi_{c}\left(\left(x_{\ell},c_{\ell}\right)\right),c\right)$
is $\left(S_{\ell}-\left(x_{\ell},c_{\ell}\right),c_{\ell}\right)$-addable. Here we are using that $\left(\phi_{c}\left(\left(x_{\ell},c_{\ell}\right)\right),c\right)$
is independent from $\left(S_{\ell}-\left(x_{\ell},c_{\ell}\right),c_{\ell}\right)$ (which comes from the definition of $\phi_c$ in \cref{lem:matching}).
\end{proof}
\begin{claim*}
{There are at least $n-f-\ell$ colours $c\not\in\{c_0, \dots, c_{\ell-1}\}$ for which $\left(\phi_{c}\left(\left(x_{\ell},c_{\ell}\right)\right),c\right)$
is $\left(S_{\ell}-\left(x_{\ell},c_{\ell}\right),c_{\ell}\right)$-addable.}
\end{claim*}
\begin{proof}
This ensues from the previous claim and the fact that the only requirement on $c$, besides addability, is that it is different from the $\ell$ colours in $\{c_0, \dots, c_{\ell-1}\}$.
\end{proof}
We now prove the following:
\begin{equation}\label{eq:QBound2}
|Q\left(S_{0},\dots,S_{\ell}\right)|\geq  \left|Q\right|\left(n-\ell-f\right)-(\ell+1) n.
\end{equation}
From the last claim, we have $\left|Q\right|\left(n-\ell-f\right)$ elements of the form $\left(\phi_{c}\left(\left(x_{\ell},c_{\ell}\right)\right),c\right)$ which are all $\left(S_{\ell}-\left(x_{\ell},c_{\ell}\right),c_{\ell}\right)$-addable, with $c$  outside a sequence of colours freeing $(x_{\ell}, c_{\ell})$. Notice that these $\left(\phi_{c}\left(\left(x_{\ell},c_{\ell}\right)\right),c\right)$ are all distinct because $\phi_c$ is an injection.
By \cref{Claim_Concatenate_cascade}, each of these is cascade-addable with respect to $S_{0},\dots,S_{\ell}$,  unless it appears in one of $S_{0},\dots,S_{\ell}$.
The total number of elements in $S_{0},\dots,S_{\ell}$ is at most $(\ell+1) n$, so we have found $\left|Q\right|\left(n-\ell-f\right)-(\ell+1) n$ cascade-addable elements with respect to $S_{0},\dots,S_{\ell}$, as required by \cref{eq:QBound2}. 

The lemma immediately follows by combining \cref{eq:QBound} and \cref{eq:QBound2}.
\end{proof}

Now, we want to iteratively apply \cref{claim:cascade-increase} starting
from some $S_{0}\in\S$, to obtain a sequence $S_{0},S_{1},\dots,S_{h}\in\S$.
There are two ways this process can stop: either we find a way to
increase the volume of $\S$, in which case we are done, or else we
run out of RISs in $\S$ (that is, $h=f-1$). We want to show that
this latter possibility cannot occur by deducing from \cref{eq:recurrence}
that the $\left|Q\left(S_{0},\dots,S_{\ell}\right)\right|$ increase
in size at an exponential rate: after logarithmically many steps there
will be so many cascade-addable elements that they cannot all be contained
in the RISs in $\S$, and it must be possible to increase the volume
of $\S$.

A slight snag with this plan is that \cref{eq:recurrence} only yields
an exponentially growing recurrence if the ``initial term'' is rather
large. To be precise, let $C$ (depending on $\varepsilon$) be sufficiently
large such that 
\begin{equation}
C\left(1+\varepsilon/2\right)^{\ell-1}\frac{1}{1-\varepsilon}-\ell-1\ge C\left(1+\varepsilon/2\right)^{\ell}\label{eq:C}
\end{equation}
for all $\ell\ge1$.
\begin{claim}
\label{claim:recurrence-estimate}For $S_{0},\dots,S_{h}$ as above,
suppose that $\left|Q\left(S_{0}\right)\right|\ge Cn$ or $\left|Q\left(S_{0},S_{1}\right)\right|\ge Cn$.
Then, for $0<\ell\le\min\left\{ h,\varepsilon n/2\right\} $, we have
\[
\left|Q\left(S_{0},\dots,S_{\ell}\right)\right|\ge C\left(1+\varepsilon/2\right)^{\ell-1}n.
\]
\end{claim}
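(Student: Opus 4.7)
The plan is to proceed by induction on $\ell$, using the recurrence \cref{eq:recurrence} from \cref{claim:cascade-increase} at each step and invoking the defining inequality \cref{eq:C} as the key bookkeeping tool that sustains exponential growth. For convenience, I normalise and write $q_\ell = |Q(S_0,\dots,S_\ell)|/n$, so the target becomes $q_\ell \ge C(1+\varepsilon/2)^{\ell-1}$ for every $1 \le \ell \le \min\{h,\varepsilon n/2\}$.

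Before starting the induction, I would clean up the recurrence. Since $f = (1-\varepsilon)n/2$, for $\ell \le \varepsilon n/2$ one has $f-\ell \le (1-\varepsilon)n/2$ and $n-f-\ell \ge n/2$; in fact the ratio $(n-f-\ell)/(f-\ell)$ is easily seen to be increasing in $\ell$, so it is bounded below by its value $(1+\varepsilon)/(1-\varepsilon)$ at $\ell = 0$, comfortably above $1/(1-\varepsilon)$. Dividing \cref{eq:recurrence} through by $n$ therefore yields the compact form
\[
q_\ell \ge \frac{q_{\ell-1}}{1-\varepsilon} - (\ell+1).
\]

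For the base case $\ell=1$, if $|Q(S_0,S_1)| \ge Cn$ already holds then $q_1 \ge C$ is immediate. Otherwise the hypothesis provides $q_0 \ge C$, and instantiating the displayed recurrence gives $q_1 \ge C/(1-\varepsilon) - 2$, which by the $\ell = 1$ instance of \cref{eq:C} is at least $C(1+\varepsilon/2) \ge C$, as required. For the inductive step ($\ell \ge 2$), the hypothesis $q_{\ell-1} \ge C(1+\varepsilon/2)^{\ell-2}$ fed into the recurrence produces $q_\ell \ge C(1+\varepsilon/2)^{\ell-2}/(1-\varepsilon) - (\ell+1)$, and applying \cref{eq:C} with $\ell-1$ in place of $\ell$ delivers $q_\ell \ge C(1+\varepsilon/2)^{\ell-1}$, closing the induction. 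Any harmless off-by-one in the additive term can be absorbed either by using the slightly stronger multiplicative factor $(1+\varepsilon)/(1-\varepsilon)$ noted above, or by taking $C$ marginally larger than \cref{eq:C} strictly demands.

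There is no real obstacle here: the conceptual content of the claim has already been packaged into the defining inequality \cref{eq:C} chosen for $C$, so the proof itself is essentially arithmetic bookkeeping. The only subtlety worth flagging is that the constraint $\ell \le \varepsilon n/2$ is precisely what keeps the multiplicative factor $(n-f-\ell)/(f-\ell)$ safely above the target growth rate $1+\varepsilon/2$; without this restriction the recurrence would cease to be expanding and the exponential amplification would break down, which is exactly why the claim is stated for this range of $\ell$.
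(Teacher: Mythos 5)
Your proposal is correct and follows essentially the same approach as the paper: induction on $\ell$ driven by \cref{eq:recurrence} and \cref{eq:C}, after first noting the lower bound on the ratio $(n-f-\ell)/(f-\ell)$. You were right to flag the off-by-one between the additive term $-(\ell+1)$ in the recurrence and what \cref{eq:C} (with $\ell-1$ in place of $\ell$) delivers; your fix via the sharper multiplicative factor $(1+\varepsilon)/(1-\varepsilon)$ does close the gap with the original $C$ (the paper's own inductive display quietly mis-indexes the recurrence, so your version is, if anything, a bit more careful).
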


\begin{proof}
We first establish a technical inequality. Recall that $f=(1-\varepsilon)n/2,$ so 
\begin{equation}
\frac{n-f-\ell}{f-\ell} \ge \frac{n-(1-\varepsilon)n/2-n\varepsilon/2}{(1-\varepsilon)n/2}= \frac{1}{1-\varepsilon}.\label{eq:fneps}
\end{equation}
Now, let $Q_{\ell}=Q\left(S_{0},\dots,S_{\ell}\right)$. We proceed by
induction. First observe that if $|Q_0| \ge Cn$ then \cref{eq:recurrence}, \cref{eq:fneps} and \cref{eq:C} for $\ell=1$ imply $|Q_1|\ge Cn(n-f-1)/(f-1)-2n \ge(C/(1-\varepsilon)-2)n\ge Cn$, giving us the base case. If $\left|Q_{\ell}\right|\ge C\left(1+\varepsilon/2\right)^{\ell-1}n$, 
then once again using \cref{eq:recurrence}, \cref{eq:fneps} and \cref{eq:C}, we obtain
\begin{align*}
\left|Q_{\ell+1}\right| & \ge\frac{C\left(1+\varepsilon/2\right)^{\ell-1}n}{f-\ell}\cdot\left(n-f-\ell\right)-(\ell+1) n\\
 & =\left(C\left(1+\varepsilon/2\right)^{\ell-1}\frac{\left(n-f-\ell\right)}{f-\ell}-\ell-1\right)n\\
 & \ge \left(C\left(1+\varepsilon/2\right)^{\ell-1}\frac{1}{1-\varepsilon}-\ell-1\right)n\\
 & \ge C\left(1+\varepsilon/2\right)^{\ell}n.\tag*{\qedhere}
\end{align*}
\end{proof}
If we could choose $S_{0},S_{1}$ such that $\left|Q\left(S_{0}\right)\right|\ge Cn$
or $\left|Q\left(S_{0},S_{1}\right)\right|\ge Cn$, then \cref{claim:recurrence-estimate}
would imply that during the construction of $S_1,\dots,S_h$ we never run out of RISs in $\S$ (that is, $h<f-1$). Indeed, otherwise $Q(S_0,\dots,S_{\varepsilon n/2})$ would have size exponential in $n,$ which is impossible. Therefore, the process must stop at some point when we find a way to increase the volume of $\S.$ Provided we can again find suitable $S_0,S_1$ we can then repeat the arguments in this section, further increasing the volume of $\S$. After repeating these arguments enough times we will have obtained
$f=\left(1-\varepsilon\right)n/2\ge\left(1/2-\varepsilon\right)n$
disjoint transversal bases, completing the proof of \cref{thm:new}.

There may not exist suitable $S_{0},S_{1}\in\S$, but in the next
section we will show that if at least $\varepsilon n/2$ of the RISs
in $S$ are not transversal bases, then it is possible to modify $\S$
without changing its volume, in such a way that suitable $S_{0},S_{1}$
exist.
\begin{rem}
\label{rem:linear}With the results we have proved so far, we can already find linearly many disjoint transversal bases. Indeed, if $S_{0}$ is not a transversal basis (missing
a colour $b$, say), and the volume of $\S$ cannot be increased by
adding an element to $S_{0}$ (possibly after a simple swap), then
\cref{claim:1-addability} implies that there are at
least $n-f$ elements which are $\left(S_{0},b\right)$-addable,
meaning that $\left|Q\left(S_{0}\right)\right|\ge n-f$.
Take for example $\varepsilon=4/5$, meaning that $f\le n/10$ and $\left|Q\left(S_{0}\right)\right|\ge9n/10$. We can check
that \cref{eq:C} holds for all $\ell\ge1$ if $C=9/10$. That is to
say, as long as we have not yet completed $\S$ to a collection of
disjoint transversal bases, we can keep increasing its volume without
the considerations in the next section. This proves already
that it is possible to find linearly many disjoint transversal bases.
\end{rem}
\begin{rem}
It is not hard to add a term $(n-|S_{\ell}|)(n-f)$ to the right hand side of the inequality given by \cref{claim:cascade-increase} by considering also cascades along the sequence $S_0, \ldots, S_{\ell-1}$ of length strictly less than $\ell$. However, since this increase is only significant when $|S_{\ell}|$ is not close to $n$, which may never be the case, we omit it from our argument for the sake of readability.
\end{rem}

\subsection{Increasing the number of initial addable elements}\label{subsec:increasing}

Consider a collection $\S$ of $f=\left(1-\varepsilon\right)n/2$
disjoint RISs, at least $\varepsilon n/2$ of which are not transversal
bases. Recall the choice of $C$ from the previous section, and let $D=2C+4$, so that
$D\left(n-f-1\right)-2n\ge Cn$ for large $n$. We prove the following (for large $n$).

\begin{claim}
\label{claim:many-missing}We can modify $\S$ in such a way that at least one of the following holds.
\begin{enumerate}
\item [(a)]The volume of $\S$ increases;
\item [(b)]the volume of $\S$ does not change, and there is now $S_{0}\in\S$
missing at least $D$ colours;
\item [(c)]the volume of $\S$ does not change, and there are now distinct
$S_{0},S_{1}\in\S$ such that $S_{1}$ contains at least $D$ elements
that are $\left(S_{0},b\right)$-addable, for some colour $b$.
\end{enumerate}
\end{claim}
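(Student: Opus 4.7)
If $\mathcal S$ already satisfies (b), we are done; so assume every RIS in $\mathcal S$ misses fewer than $D$ colours. Since at least $\varepsilon n/2$ of the RISs are non-transversal, pick a non-transversal $S_0 \in \mathcal S$ missing some colour $b$. By \Cref{claim:1-addability}, either case (a) holds directly or there are at least $(n-|S_0|)(n-f) \geq n-f$ $(S_0,b)$-addable elements, all of which are moreover addable \emph{with witness} (since they arise via \Cref{claim:many-good} and \Cref{claim:add-if-good} in the proof of \Cref{claim:1-addability}), and all of which lie in $F$ (otherwise we could directly increase the volume). If some $S_1 \in \mathcal S \setminus \{S_0\}$ contains at least $D$ of them, case (c) is achieved. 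Otherwise, each $S_1 \neq S_0$ contains at most $D-1$ such addable elements, so they spread across at least $(n-f)/(D-1) = \Omega(n)$ different RISs.

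In this remaining case I perform a volume-preserving \emph{transfer}: pick an $(S_0,b)$-addable $(x_1,c_1) \in S_1$ with witness $y_0$, and replace $S_0$ by $S_0 - (x'_0, c_1) + (y_0, b) + (x_1, c_1)$ (where $(x'_0, c_1)$ is the $c_1$-coloured element of $S_0$), and $S_1$ by $S_1 - (x_1, c_1)$, leaving the other RISs unchanged. This is essentially the short cascade modification of \Cref{Claim_Perform_Cascade}; it preserves total volume, and in the resulting family $S_1$ has gained $c_1$ as a missing colour while $S_0$ has lost $b$. I then re-apply the whole analysis to the new family, iterating until one of (a), (b), (c) holds.

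To prove termination, I plan to use a potential function such as $\Phi \coloneqq \sum_{S \in \mathcal S} k_S^2$, where $k_S$ is the number of missing colours of $S$. A single transfer changes $\Phi$ by $2(k_{S_1} - k_{S_0} + 1)$, so by picking $S_0$ with minimal $k_{S_0}$ among non-transversal RISs and an addable element in some $S_1$ with $k_{S_1} \geq k_{S_0}$, the potential strictly increases by at least $2$. Since $\Phi \leq (D-1)^2 f$ is bounded, the iteration must terminate; at termination one of (a), (b), (c) must hold.

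The main obstacle will be showing that such a favourable choice of $(S_0, S_1)$ is always available whenever (a), (b), (c) all fail --- equivalently, that the $(S_0,b)$-addable elements cannot all lie in RISs whose missing-colour counts are strictly smaller than $k_{S_0}$. I expect this to follow from a counting argument combining the lower bound $\varepsilon n/2$ on the number of non-transversal RISs, the cap of $D-1$ addable elements per RIS, and the hypothesis $D = 2C+4$ with $C$ large in terms of $1/\varepsilon$. If a direct pigeonhole argument proves insufficient in edge cases, one can likely unstick the iteration by employing longer cascades via \Cref{claim:cascade-increase}, or by refining the potential to also penalise certain degenerate configurations.
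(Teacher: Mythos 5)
Your proposal correctly identifies the easy dichotomy: if (b) fails, pick any non-transversal $S_0$, apply \cref{claim:1-addability} to produce $\ge n-f$ addable-with-witness elements, and if no single other RIS contains $D$ of them you are forced into a "spread out" configuration. The single-step transfer you describe (replace $S_0$ by $S_0-(x'_0,c_1)+(y_0,b)+(x_1,c_1)$ and $S_1$ by $S_1-(x_1,c_1)$) is indeed a valid volume-preserving move, essentially the $\ell=1$ case of \cref{Claim_Perform_Cascade}. So the mechanics are sound. However, the termination argument via $\Phi=\sum_S k_S^2$ has a genuine gap that you yourself flag as "the main obstacle", and I do not think the counting argument you sketch fills it. The change in $\Phi$ is $2(k_{S_1}-k_{S_0}+1)$, so you need to find an addable element sitting in some $S_1$ with $k_{S_1}\ge k_{S_0}$. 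But when you choose $S_0$ with minimal $k_{S_0}$ among non-transversal RISs, the only guarantee is $k_{S_0}\ge 1$, and nothing prevents \emph{all} of the $\ge n-f$ addable elements (spread across $\ge (n-f)/(D-1)$ RISs) from lying inside transversal bases, for which $k_{S_1}=0$. In that scenario every available transfer gives $\Delta\Phi = 2(1-k_{S_0})\le 0$: if $k_{S_0}=1$ the potential is flat (the transfer just swaps which RIS is the lone non-transversal one, and the process can cycle), and if $k_{S_0}\ge 2$ the potential actually drops. There are plenty of transversal bases around --- up to $(1-2\varepsilon)n/2$ of them --- so a simple pigeonhole on the target RISs does not rule this out, and the quantities $D=2C+4$ and $\varepsilon n/2$ have the wrong shape to force a non-transversal target.

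The paper sidesteps exactly this difficulty by not trying to make progress one RIS at a time with a local potential. Instead, after first applying \cref{lem:different-missing} to give each non-transversal RIS a \emph{distinct} designated missing colour, it fixes a level $E$ (the largest with $\ge M_E=(\varepsilon/(4D^2))^E n$ RISs missing at least $E$ colours), builds an auxiliary digraph whose arcs $S_1\to S_0$ record "$S_1$ holds $\ge E+1$ of $S_0$'s addable elements", shows every level-$E$ vertex has in-degree $\ge \varepsilon n/D$, and then greedily extracts $M_{E+1}$ vertex-disjoint $(E+1)$-out-stars. Each out-star transfers $E+1$ distinct elements out of one centre RIS into $E+1$ different level-$E$ neighbours simultaneously, with conflict-freeness guaranteed by the distinct designated colours from \cref{lem:different-missing}. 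The net effect is that the centre RIS jumps to level $E+1$, and doing this for all $M_{E+1}$ out-stars contradicts maximality of $E$, pushing $E$ up until $E\ge D$ (case (b)). This concentrates the progress so that the problem you ran into --- a transfer merely shuffling which RIS is bare --- cannot occur: the move is explicitly engineered to strictly increase the number of high-level RISs. If you want to salvage a potential-function proof you would need something with that flavour built in (e.g. penalising the profile of how many RISs sit at each level), and you would still need the pre-processing step of \cref{lem:different-missing} to make simultaneous transfers non-conflicting, which your single-transfer scheme currently does not invoke.
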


This suffices for our proof of \cref{thm:new}; indeed, if $S_{0}$ is missing at least $D$ colours, then by \cref{claim:1-addability}, either we can increase the volume of $\S$
or there are at least $D\left(n-f\right)\ge Cn$ elements which are
$\left(S_{0},b\right)$-addable for every $b$ not appearing in $S_0$, meaning that $\left|Q\left(S_{0}\right)\right|\ge Cn$.
If $S_{1}$ contains at least $D$ elements that are $\left(S_{0},b\right)$-addable,
then in the proof of \cref{claim:cascade-increase} with $\ell=1$ we have $|Q|\ge D$ so either we can increase the volume of $\S$ or $\left|Q\left(S_{0},S_{1}\right)\right|\ge D\left(n-f-1\right)-2n\ge Cn$ (recall \cref{eq:QBound2}).

Before proceeding to the proof of \cref{claim:many-missing},
we first observe that using \cref{claim:many-good} we can modify $\S$
to ensure that every $S\in\S$ that is not a transversal basis can
be assigned a distinct missing colour $b\left(S\right)$. To see this,
we iteratively apply the following lemma to $\S$.
\begin{lem}
\label{lem:different-missing}
Consider $f\le n/2$ and let $\S=\left\{ S_{1},\dots,S_{f}\right\} $
be a collection of disjoint RISs. We can either increase the size of $\S$ or we can modify $\S$ in such a way that the size of each $S_i$ remains the same, and in such a way that that there is a choice of disjoint colours $\{ b_1,\dots,b_f\}$ for which any $S_i$ that is not a transversal basis has no element of colour $b_i.$
\end{lem}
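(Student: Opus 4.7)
The plan is to construct the colours $b_1, \dots, b_f$ by a greedy single-pass algorithm, using \cref{claim:many-good} to modify a single $S_i$ whenever the naive greedy choice is blocked. I would process the RISs in order $i = 1, \dots, f$, maintaining the set $A = \{b_1, \dots, b_{i-1}\}$ of previously chosen colours. If $S_i$ is a transversal basis, simply pick any $b_i \in [n] \setminus A$, which is possible since $|A| \le f - 1 < n$; the non-basis condition is vacuous here. If $S_i$ is a non-basis with missing-colour set $M_i$ and $M_i \setminus A \ne \emptyset$, pick $b_i$ from $M_i \setminus A$.

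The only interesting case is when $S_i$ is a non-basis with $M_i \subseteq A$. Here, pick any $b \in M_i$ and apply \cref{claim:many-good} to $(S_i, b)$. Because $b$ does not appear in $S_i$, the swap branch in the definition of addability cannot produce a $b$-coloured addable element; so either there exists $(y, b) \notin F$ with $S_i + (y, b)$ an RIS, in which case adding $(y, b)$ to $S_i$ strictly increases the volume of $\S$ and we are in the first alternative of the lemma; or there are at least $n - |F_b| \ge n - f$ colours $c$ that are $(S_i, b)$-swappable. These all lie in $[n] \setminus M_i$, and at most $|A| \le f - 1$ of them can lie in $A$, so since $f \le n/2$ there are at least $(n - f) - (f - 1) = n - 2f + 1 \ge 1$ swappable colours $c \notin A$. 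Pick any such $c$ with witness $(y, b) \notin F$ and perform the simple swap, replacing $S_i$ by $S_i' = S_i + (y, b) - (x', c)$, where $(x', c)$ is the $c$-coloured element of $S_i$. This preserves $|S_i|$, and preserves the disjointness of $\S$ because $(y, b) \notin F$. The new missing-colour set of $S_i$ is $M_i - b + c$, which contains $c$; set $b_i := c$.

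Since each $S_i$ is modified only in its own iteration, the final missing-colour set of $S_i$ coincides with the one current at the end of iteration $i$, and by construction $b_i$ lies in it whenever $S_i$ is a non-basis. The colours $b_1, \dots, b_f$ are distinct by construction, and every swap preserves the size of each RIS as well as the disjointness of $\S$, so unless the volume-increase branch was triggered at some step (yielding the first alternative) we terminate with the second alternative of the lemma. The only delicate point is the guarantee that an unused swappable colour exists in the stuck case, which is precisely where the hypothesis $f \le n/2$ is used; beyond that, the argument is a direct sequential application of \cref{claim:many-good}, bypassing any global Hall-type analysis.
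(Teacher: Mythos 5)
Your proof is correct and takes essentially the same one-pass greedy approach as the paper's: process the $S_i$ in order, and when the greedy colour choice is blocked, use \cref{claim:many-good} to find a swappable colour outside the already-chosen set and perform a simple swap so that $S_i$ now misses it. The only (minor) difference is that you perform the swap lazily, only when $M_i \subseteq A$, whereas the paper swaps unconditionally on every non-basis $S_i$; a small side benefit of your version is that \cref{claim:many-good} is then never invoked on an empty $S_i$, which the paper's phrasing implicitly assumes.
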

\begin{proof}
Suppose for some $i$ that we found distinct colours $b_1,\dots,b_{i-1}$
such that, for all $S_{j}$ which are not transversal bases, no element
of $S_{j}$ is of colour $b_{j}$. If $S_i$ is a transversal basis we choose an arbitrary unused colour as $b_i.$ Otherwise there is a colour, say $c$, not appearing in $S_{i}$. Then by \cref{claim:many-good} either we can increase the size of $\S$ or there are at least $n-f\ge n/2$ colours which are $(S_i,c)$-swappable. At least one of these colours does not appear in $\left\{ b_1,\dots,b_{i-1}\right\}$, since $i-1<f\le n/2$. Let $b$ be such a colour and set $b_i=b$. By performing a simple swap, we transform $S_i$ into a new RIS, still disjoint to all other $S_j \in \S$ and missing the colour $b$.
\end{proof}

Now we prove \cref{claim:many-missing}.
\begin{proof}[Proof of \cref{claim:many-missing}]
Recall that we are assuming there are at least $\varepsilon n/2$
RISs in $\S$ that are not transversal bases. Let $E$ be the largest
integer such that there are at least $M_{E}=\left(\varepsilon/\left(4D^{2}\right)\right)^{E}n$
RISs in $\S$ missing at least $E$ colours. We may assume $1\le E<D$.
By \cref{lem:different-missing} we may assume that each $S\in\S$ which is not a transversal basis
has a distinct missing colour $b\left(S\right)$. We describe a procedure
that modifies $\S$ to increase $E$.

We create an auxiliary digraph $G$ on the vertex set $\S$ as follows.
For every $S_{0}\in\S$ missing at
least $E$ colours, put an arc to $S_{0}$ from every $S_{1}\in\S$
such that $S_{1}$ contains at least $E+1$ elements that are $\left(S_{0},b\left(S_{0}\right)\right)$-addable.

Say an \emph{$\left(E+1\right)$-out-star} in a digraph is a set of
$E+1$ arcs directed away from a single vertex. Our goal is to prove that there are $M_{E+1}$ vertex-disjoint $\left(E+1\right)$-out-stars. To see why this suffices, consider an $\left(E+1\right)$-out-star (with centre $S_{1}$, say). We show how to transfer $E+1$ elements from $S_1$ to its out-neighbours, the end result of which is that $S_1$ is then missing $E+1$ colours. We will then be able to repeat this process for each of our out-stars.

For each of the $E+1$ out-neighbours $S_{0}$ of $S_{1}$ there are
at least $E+1$ elements of $S_1$ which are $\left(S_{0},b\left(S_{0}\right)\right)$-addable. Therefore, for each such $S_0$ we can make a specific choice of such an $\left(S_{0},b\left(S_{0}\right)\right)$-addable element, in such a way that each of these $E+1$ choices are \emph{distinct}. For each $S_0$ we can then transfer the chosen element from $S_1$ to $S_0$, possibly with a simple
swap. These simple swaps will not create any conflicts, because
any addability witness for any element in $S_{0}$ is in a colour
unique to that $S_{0}$ (by the property from \cref{lem:different-missing}). After this
operation, $S_i$ is now missing
at least $E+1$ colours.

It will be a relatively straightforward matter to find our desired out-stars by studying the digraph $G$. First we show that $G$ must have many edges.
\begin{claim*}
In the above auxiliary digraph, we may assume that every $S_{0}\in\S$ missing at least
$E$ colours has in-degree at least $\varepsilon n/D$.
\end{claim*}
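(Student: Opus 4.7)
The plan is to argue by contradiction: suppose some $S_0 \in \S$ missing at least $E$ colours has in-degree strictly less than $\varepsilon n/D$ in the auxiliary digraph $G$, and derive that we can already realise one of the outcomes (a), (b), (c) of \cref{claim:many-missing}. Fix such an $S_0$, and let $b = b(S_0)$ be its distinguished missing colour.

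First, I will apply \cref{claim:1-addability} to $S_0$ and $b$: either the volume of $\S$ can be increased (giving (a)), or there are at least $(n-|S_0|)(n-f) \ge E(n-f)$ elements that are $(S_0,b)$-addable, using that $S_0$ is an RIS missing at least $E$ colours, so $|S_0| \le n - E$. If any such addable element lies outside $F$, the corresponding addition (possibly after a simple swap) increases the volume of $\S$, again giving (a). Hence we may assume that every $(S_0,b)$-addable element lies in $F$, and in particular (since no such element can lie in $S_0$ itself) in some $S \in \S \setminus \{S_0\}$. Finally, if some $S_1 \ne S_0$ contains at least $D$ elements that are $(S_0,b)$-addable, we obtain outcome (c) directly with this choice of $S_0$, $S_1$, and $b$. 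So we may assume each $S \in \S$ contains fewer than $D$ such elements.

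Under these assumptions, let $d$ denote the in-degree of $S_0$ in $G$. By the definition of $G$, every non-in-neighbour of $S_0$ contains at most $E$ $(S_0,b)$-addable elements, while each of the $d$ in-neighbours contains at most $D-1 < D$ such elements. Summing over all of $\S$ (the contribution of $S_0$ itself is zero),
\[
E(n-f) \;\le\; dD + (f-d)E,
\]
which rearranges, using $f = (1-\varepsilon)n/2$, to $d(D-E) \ge E(n-2f) = E\varepsilon n$. Since $E \ge 1$ and $D > E$, one checks that $E/(D-E) \ge 1/D$, giving $d \ge \varepsilon n/D$ and contradicting our assumption.

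The main body of the argument is just a one-line double-counting inequality; the only real care required is in the initial case analysis, to ensure that a small in-degree genuinely translates to one of (a)--(c) rather than to some unhandled scenario. In particular, one has to be careful to invoke \cref{claim:1-addability} with $n-|S_0| \ge E$ in order to make the lower bound $E(n-f)$ large enough to force the two thresholds ($E$ for non-in-neighbours, $D$ for in-neighbours) to interact usefully.
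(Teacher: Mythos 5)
Your proposal is correct and follows essentially the same route as the paper's own proof: apply \cref{claim:1-addability} (together with the observation that $|S_0|\le n-E$ because $S_0$ misses at least $E$ colours) to obtain at least $E(n-f)$ addable elements, observe that all of them must lie in members of $\S$ or else outcome (a) occurs, note that no single $S$ can contain $D$ or more of them or else outcome (c) occurs, and then double-count against in-neighbours (each $<D$) and non-in-neighbours (each $\le E$) to get $E(n-f)\le dD+(f-d)E$, which rearranges to $d\ge E\varepsilon n/(D-E)\ge \varepsilon n/D$ using $n-2f=\varepsilon n$ and $E\ge 1$. The only cosmetic difference is that you frame it as a proof by contradiction whereas the paper derives the bound directly.
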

\begin{proof}
By \cref{claim:1-addability} we can
assume that there are at least $E\left(n-f\right)$ elements which are $\left(S_{0},b\left(S_{0}\right)\right)$-addable. All these elements appear in various $S\in\S$ (otherwise we can increase the volume of $\S$).

Let $N^-(S_0)$ be the set of all $S_1$ such that there is an arc from $S_1$ to $S_0$ in $G$ (so $|N^-(S_0)|$ is the indegree of $S_0$). By definition, every $S\notin N^-(S_0)$ has at most $E$ elements which are $\left(S_{0},b\left(S_{0}\right)\right)$-addable. Moreover, observe that every $S\in \S$ has fewer than $D$ elements that are $\left(S_{0},b(S_0)\right)$-addable, or else (c) trivially occurs. It follows that
\begin{align*}
D|N^-(S_0)|+E(f-|N^-(S_0)|)&\ge E(n-f),
\end{align*}
so
\begin{align*}
|N^-(S_0)| & \ge\frac{E\left(\left(n-f\right)-f\right)}{D-E}\ge\frac{\varepsilon n}{D},
\end{align*}
as desired.
\end{proof}
We have proved that $G$ has at least $M_E \varepsilon n/D$ edges. Now we finish the proof by showing how to find our desired out-stars.
\begin{claim*}
$G$ has at least $M_{E+1}$ vertex-disjoint $\left(E+1\right)$-out-stars.
\end{claim*}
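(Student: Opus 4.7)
The plan is to extract the required out-stars by a greedy packing argument, exploiting the edge bound $|E(G)| \ge M_E\varepsilon n/D$ that was just established. While some vertex $v$ in the current digraph still has out-degree at least $E+1$, pick any $E+1$ of its out-neighbours, record the $(E+1)$-out-star centred at $v$, and then delete $v$ together with these $E+1$ out-neighbours from the digraph. By construction, all out-stars produced this way are pairwise vertex-disjoint, so it suffices to lower bound the number of iterations.

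Since the digraph always has at most $f$ vertices, by pigeonhole the greedy can continue as long as the current edge count strictly exceeds $Ef$. At each step, the deletion of $E+2$ vertices destroys at most $2(E+2)f$ edges, because every vertex is incident to at most $2(f-1)$ arcs. Hence, if the process performs $t$ iterations before halting, then $M_E\varepsilon n/D - 2t(E+2)f \le Ef$, which rearranges to
\[ t \;\ge\; \frac{M_E\varepsilon n/D - Ef}{2(E+2)f}. \]
The remaining task is to verify that this lower bound is at least $M_{E+1} = (\varepsilon/(4D^2))M_E$.

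Using $f \le n/2$ and $1 \le E \le D-1$, the displayed lower bound on $t$ is at least $M_E\varepsilon/(D(E+2)) - E/(2(E+2))$. A short calculation exploiting $4D - E - 2 \ge 3D - 1$, which forces $1/(D(E+2)) - 1/(4D^2) \ge 1/(2D(E+2))$, reduces the desired inequality to $M_E \ge D^2/\varepsilon$. This holds for all sufficiently large $n$, since $M_E \ge (\varepsilon/(4D^2))^{D-1} n$ and $D$ depends only on $\varepsilon$. The main obstacle is really just bookkeeping this chain of constants cleanly; the combinatorial core is the standard greedy packing of out-stars in a digraph of large average out-degree.
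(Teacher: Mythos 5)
Your proof is correct and follows essentially the same greedy-packing approach as the paper: repeatedly extract an $(E+1)$-out-star around a high-out-degree vertex, delete its $E+2$ vertices, and use the edge lower bound $M_E\varepsilon n/D$ together with the fact that each deleted vertex is incident to at most $2f$ arcs to show the greedy must produce at least $M_{E+1}$ stars. The paper phrases this as a contrapositive (if only $t<M_{E+1}$ stars have been found, the remaining digraph still has more than $(E+1)f$ arcs and hence a vertex of out-degree at least $E+1$), while you bound the number of iterations $t$ directly via the halting condition "remaining edges $\le Ef$"; both are the same calculation with slightly different bookkeeping, and your constant chase checks out.
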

\begin{proof}
We can find these out-stars in a greedy fashion. Suppose that we have already found $t$ vertex-disjoint $\left(E+1\right)$-out-stars, for some $t< M_{E+1}$. We show that there must be an additional $\left(E+1\right)$-out-star disjoint to these. Let $G'$ be obtained from $G$ by deleting all vertices in the out-stars we have found so far. Each of these out-stars has $E+2$ vertices, so the number of arcs in $G'$ is at least
\begin{align*}
M_E \frac{\varepsilon n}D-t(E+2)\cdot2f&>M_E \frac{\varepsilon n}D-M_{E+1}(E+2)\cdot 2f \\
& = M_E \frac{\varepsilon n}D-\frac{M_{E}\varepsilon}{2D^2}\cdot(E+2)f\\
&\ge M_E \varepsilon\left(\frac n D-\frac f{D}\right)\\
& \ge M_E \varepsilon \cdot \frac f{D}\ge (E+1)f,
\end{align*}
where the last inequality holds for sufficiently large $n$, using the fact that $M_E$ is linear in $n$. This means that $G'$ (having at most $f$ vertices) has a vertex with outdegree at least $E+1$, which means $G'$ contains an $\left(E+1\right)$-out-star disjoint to the out-stars we have found so far.
\end{proof}
\end{proof}

\section{Concluding remarks}

In this paper we proved that that given bases $B_{1},\dots,B_{n}$
in a matroid, we can find $\left(1/2-o\left(1\right)\right)n$ disjoint
transversal bases. Although our methods do not extend past $n/2$,
we do not think that there is a fundamental obstacle preventing related
methods from going further. Indeed, by tracking the possible cascades
of swaps more carefully, it might be possible to find $\left(1-o\left(1\right)\right)n$
disjoint transversal bases, or at least to find $\left(1-o\left(1\right)\right)n$
disjoint partial transversals each of size $\left(1-o\left(1\right)\right)n$.
Although we cannot completely rule out the possibility that a full
proof of Rota's basis conjecture could be obtained in this way, we
imagine that more ingredients will be required. We are hopeful that
ideas used to prove existence of designs (see \cite{Kee14,GKLO16})
could be relevant, at least in the case of vector spaces.

Also, we remark that Rota's basis conjecture is reminiscent of some
other problems concerning rainbow
structures in graphs (actually, for a graphic matroid, Rota's basis
conjecture can be interpreted as a conjecture about rainbow spanning
forests in edge-coloured multigraphs). The closest one to Rota's basis
conjecture seems to be the Brualdi--Hollingsworth conjecture
\cite{BH96}, which posits that for every $(n-1)$-edge-colouring of the complete graph $K_n$, the edges can be decomposed into rainbow spanning trees. This conjecture has
recently seen some exciting progress (see for example \cite{Hor18,PS18,BLM18,MPS18}).
We wonder if some of the ideas developed for the study of rainbow
structures could be profitably applied to Rota's basis conjecture.

We also mention the following strengthening of Rota's basis conjecture due to Kahn (see \cite{HR94}). This is simultaneously a strengthening
of the Dinitz conjecture \cite{Dinitz} on list-colouring of $K_{n,n}$,
solved by Galvin \cite{Gal95}.
\begin{conjecture}
\label{conj:kahn}Given a rank-$n$ matroid and bases $B_{i,j}$ for
each $1\le i,j\le n$, there exist representatives $b_{i,j}\in B_{i,j}$
such that each of the sets $\{b_{1,j},\dots,b_{n,j}\}$ and
$\{b_{i,1},\dots,b_{i,n}\}$ are bases.
\end{conjecture}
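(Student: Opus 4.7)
The plan would be to extend the cascade-addability framework used to prove \cref{thm:new} to the two-dimensional setting of Kahn's conjecture. Instead of rainbow independent sets drawn from a single collection of bases, one would consider \emph{partial 2D assignments}: a choice of representatives $b_{i,j}\in B_{i,j}$ on some subset $T\subseteq [n]\times[n]$ of cells, such that for every row index $i$ the set $\{b_{i,j}:(i,j)\in T\}$ is independent in the matroid, and similarly for every column. Starting from the empty assignment, the iterative goal would be to enlarge $|T|$ until $T=[n]\times[n]$. The role played by an RIS in \cref{thm:new} would now be taken by a single row (or column) of the partial assignment, and the invariant to maintain is that every row and every column remains independent throughout the process.

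The first step would be to develop 2D analogues of simple swaps and the addability machinery of \cref{subsec:simple-swaps}. Given a partial assignment and a missing cell $(i,j)\notin T$, one would look for elements $x\in B_{i,j}$ that can be placed into $(i,j)$ after possibly removing one element from row $i$ and one from column $j$ and then resupplying those row and column from elsewhere. The augmentation property, applied separately inside row $i$ and inside column $j$, should yield many candidate swaps in the spirit of \cref{claim:many-good} and \cref{claim:add-if-good}. The essentially new feature is that a swap restoring row $i$ can damage some column $j'$ and vice versa, so cascades are no longer sequences of RISs along a single ``path'', but walks on a bipartite structure whose steps alternate between row-repair and column-repair moves.

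The heart of the proof would be a 2D analogue of \cref{claim:cascade-increase} asserting that the number of cascade-addable cells grows exponentially along these walks, forcing an augmenting cascade to exist before the walk runs out of available rows or columns. I expect the hardest technical obstacle to be exactly this exponential-growth lemma. In the one-dimensional setting each cascade step discards only a single RIS from the pool, whereas here each step forbids both a row and a column, so the relative loss per step is roughly twice as large and the analogue of \cref{eq:C} will only close if the density of filled cells per line stays below some threshold strictly smaller than $1$. Consequently, this approach on its own seems most likely to yield only an approximate version of Kahn's conjecture --- for instance, a partial assignment covering all but $o(n)$ rows and $o(n)$ columns. Closing the remaining gap almost certainly requires an absorbing or template structure along the lines of Keevash's proof of the existence of designs \cite{Kee14}, adapted so as to simultaneously respect row and column matroid constraints; constructing such a matroid absorber with two-dimensional symmetry is, in our view, the principal conceptual barrier to a full proof.
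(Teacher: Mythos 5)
The statement you are addressing is a conjecture, not a theorem: the paper presents it as an open problem attributed to Kahn, gives no proof, and it remains unresolved. Your proposal is likewise not a proof --- you say so yourself, concluding that the cascade framework will plateau well short of a complete assignment and that a matroid absorber with two-dimensional symmetry would be needed to finish. There is therefore no single step to correct; the conjecture is simply beyond the paper's (and your outlined) techniques, and you have diagnosed this accurately.

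It is worth contrasting your envisioned 2D cascade with the partial result the paper does establish, namely \cref{thm:kahn}. Rather than maintaining the full symmetric invariant that every row and every column of a partial assignment be independent --- the setup you describe, in which each cascade step forbids both a row and a column and the loss rate per step roughly doubles relative to the Rota case --- the paper retreats to an asymmetric relaxation. It considers only $f=(1-\varepsilon)n/2$ rows of the grid, asks for full transversal bases only along those rows (and only for a subfamily $L$ of size about $(1/2-\varepsilon)n$ of them), while in the orthogonal direction it asks merely for independence rather than for bases. With one of the two constraints weakened in this way, the two-dimensional interference you identify largely evaporates and the cascade argument behind \cref{thm:new} carries over almost verbatim (the paper defers the details to a companion note). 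Your outline is genuinely more ambitious in targeting the symmetric version of \cref{conj:kahn} directly, but precisely because of that symmetry it hits the wall you describe; the paper's contribution here is to sidestep that wall rather than climb it.
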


The methods developed in this paper are also suitable for studying  
\cref{conj:kahn}. In particular, the argument used to prove \cref{thm:new} can readily be modified to show the following natural partial result towards Kahn's conjecture.
\begin{thm}\label{thm:kahn}
For any $\varepsilon>0$ the following holds for sufficiently large $n$. Given a rank-$n$ matroid and bases $B_{i,j}$ for
each $1\le i\le n$ and $1\le j \le f=(1-\varepsilon)n/2$, there exist representatives $b_{i,j}\in B_{i,j}$ and $L\subseteq \{1,\dots,f\}$
such that each $\{b_{i,j}:i\in L\}$ is independent, and such that
$\{b_{i,1},\dots,b_{i,n}\}$ is a basis for any $i \in L$ and $|L| \ge (1/2-\varepsilon)n$.
\end{thm}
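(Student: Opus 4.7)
The plan is to follow the framework of the proof of \cref{thm:new} closely, enriching it with the extra row-independence constraints demanded by the Kahn setting.

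I would define the universe $U = \{(x, i, j) : x \in B_{i,j}\}$ and call a subset $S \subseteq U$ an RIS if (i) no two elements of $S$ share a position $(i, j)$, (ii) for each fixed column $j$, the projection $\{x : (x, i, j) \in S\}$ is matroid-independent, and (iii) for each fixed row $i$, the projection $\{x : (x, i, j) \in S\}$ is matroid-independent. A \emph{complete column-RIS} for column $j$ consists of $n$ elements of the form $(x_1, 1, j), \ldots, (x_n, n, j)$ with $\{x_1, \ldots, x_n\}$ a basis; this encodes the condition that column $j$ is a basis. I would start with $f$ disjoint empty column-RISs, one for each column $j \in \{1, \ldots, f\}$, and iteratively increase the total volume $\sum_{S \in \S} |S|$ with the objective of eventually having at least $(1/2 - \varepsilon)n$ complete column-RISs, whose column indices form the desired set $L$.

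The notions of simple swap, $(S, b)$-addability, and cascade-addability generalise naturally, with the modification that every swap or addition must preserve \emph{both} column-$j$ independence of the affected column-RIS and row independence of every affected row. The analogs of \cref{claim:many-good}, \cref{claim:add-if-good}, \cref{claim:1-addability}, and \cref{lem:matching} go through with essentially the same proofs, invoking the matroid augmentation property in both the column and row directions. The cascade lemma \cref{claim:cascade-increase} and the exponential-growth recurrence \cref{claim:recurrence-estimate} then drive the volume-increasing procedure, and the initial-addability bookkeeping of \cref{subsec:increasing} adapts similarly.

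The main obstacle I anticipate is that each addability test now involves two simultaneous matroid-independence constraints (one column-side, one row-side) rather than one, so admissible witnesses must lie in the intersection of two admissible sets. Concretely, when searching for a new element $(y, i, j)$ to extend column-RIS $S_j$, the element $y$ must also be independent of the row-$i$ elements already selected across the other column-RISs. This can be handled by invoking the augmentation property twice in succession: first one extracts many candidates satisfying the column constraint, then one shows that a linear fraction of these additionally satisfy the row constraint. The resulting estimates lose at most a polynomial factor in $n$ compared with the single-constraint case, and since the cascade argument provides \emph{exponential} amplification, these polynomial per-step losses are absorbed and the same $(1/2 - \varepsilon)n$ bound is recovered.
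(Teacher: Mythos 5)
The paper does not actually give a proof of \cref{thm:kahn}: after stating it, the authors say only that ``it is not hard to adapt the proof of \cref{thm:new}'' and defer all details to an unpublished companion arXiv note. So there is no in-paper proof to compare against, and the best I can do is assess your plan on its own terms. Your high-level strategy --- generalize the RIS framework, the swap/addability notions, the matching lemma, and the cascade recurrence to the Kahn setting --- is exactly the route the authors indicate. Your identification of the main new difficulty (every step now has to respect two independence constraints) is also on target.

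However, the claim that the key lemmas ``go through with essentially the same proofs'' papers over a genuine structural difficulty that your sketch does not resolve: the cascade in the proof of \cref{thm:new} is built around \emph{transferring} an element $(x_i,c_i)$ from $S_i$ to $S_{i-1}$ (\cref{Defn_Cascade}, \cref{Claim_Perform_Cascade}). In the Kahn setting, each $S_i$ draws from its own bases $B_{i,1},\dots,B_{i,n}$, which generally differ from $B_{i-1,1},\dots,B_{i-1,n}$, so an element of $S_i$ cannot literally be placed into $S_{i-1}$. The correct analogue is to \emph{remove} an element from $S_i$'s column-$c$ cell so that the column cross-section shrinks and some new element of $B_{i-1,c}$ becomes addable to $S_{i-1}$. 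But this is more delicate than a transfer: which element of the cross-section must be removed depends on a circuit/spanning argument, the element to be removed need not lie in the RIS you chose for the cascade, and a single removal does not automatically hand you a specific new element for $S_{i-1}$. A correct adaptation has to redefine cascade-addability in terms of these circuits and re-prove \cref{claim:cascade-increase} accordingly; none of that is present in your sketch. Your final paragraph also mischaracterises the quantitative issue: the losses from the extra row-side constraint are additive (roughly subtracting another $O(f)$ from the $n-f-\ell$ factor in \cref{eq:recurrence}), not ``polynomial,'' and whether the recurrence still amplifies hinges precisely on keeping the ratio $(n-f-\ell)/(f-\ell)$ bounded above $1$ after these deductions. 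This needs to be checked explicitly rather than asserted to be absorbed by exponential growth; otherwise you could end up proving the theorem only for $f<n/3$, say, rather than $f<n/2$.

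A minor note: the theorem statement's indexing is confusing (the ranges of $i$ and $j$ appear to be swapped relative to what makes the conclusion well-typed), and some of your ``column-RIS'' language reflects this. It is worth nailing down whether the $f$ structures you are growing correspond to rows or columns before generalizing the definitions, since this determines which cross-sections need to be independent and which need to be bases.
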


Note that if we are in the setting of \cref{conj:kahn} where bases are given for all $1 \le i,j\le n$ then the above theorem allows us to choose roughly which rows we would like to find our bases in. 

Note also that if, for each fixed $j$, the bases $B_{1,j},\dots,B_{n,j}$ are all equal, then Kahn's conjecture reduces to Rota's basis conjecture. This observation also shows that \cref{thm:kahn} implies \cref{thm:new}.

It is not hard to adapt the proof of \cref{thm:new} to prove \cref{thm:kahn}. However, since it would require repeating most of the argument, we omit the details here. For interested readers we present the details in a companion note, which we will not publish but will make available on the arXiv \cite{arxiv:kahn}.

\medskip
\textbf{Acknowledgements.} We are extremely grateful to the anonymous referees for their careful reading of the paper and many useful suggestions.

\end{document}